\newcommand{\Z}{\mathbb Z}
\newcommand{\N}{\mathbb N}
\newcommand{\Q}{\mathbb Q}
\newcommand{\R}{\mathbb R}
\newcommand{\C}{\mathbb C}
\renewcommand{\P}{\mathbb P}
\newcommand{\mc}{\mathcal}
\newcommand{\sgn}{{\rm sgn}}
\renewcommand{\phi}{\varphi}
\renewcommand{\geq}{\geqslant}
\renewcommand{\ge}{\geqslant}
\renewcommand{\le}{\leqslant}
\theoremstyle{plain}
\newtheorem{thm}{Theorem}[section]
\newtheorem{lm}[thm]{Lemma}
\newtheorem{cor}[thm]{Corollary}
\newtheorem{pr}[thm]{Proposition}
\theoremstyle{remark}
\newtheorem{rem}[thm]{Remark}
\newtheorem{ex}[thm]{Example}
\theoremstyle{definition}
\newtheorem{definition}[thm]{Definition}
\begin{document}
\date{}

\author{
	Yury Ustinovskiy\footnote{Princeton University, E-mail: \texttt{yuryu@math.princeton.edu}}
	\and
	Grigory Solomadin\footnote{Moscow State University, E-mail: \texttt{grigory.solomadin@gmail.com}}
}
\title{Projective toric generators in the unitary cobordism ring.}

\maketitle
\abstract{
By the classical result of Milnor and Novikov, the unitary cobordism ring is isomorphic to a graded polynomial ring with countably many generators: $\Omega^U_*\simeq \Z[a_1,a_2,\dots]$, ${\rm deg}(a_i)=2i$. In this paper we solve the well-known problem of constructing geometric representatives for $a_i$ among smooth projective toric varieties, $a_n=[X^{n}], \dim_\C X^{n}=n$. Our proof uses a family of equivariant modifications (birational isomorphisms) $B_k(X)\to X$ of an arbitrary complex manifold $X$ of (complex) dimension $n$ ($n\geq 2$, $k=0,\dots,n-2$). The key fact is that the change of the Milnor number under these modifications depends only on the dimension $n$ and the number $k$ and does not depend on the manifold $X$ itself.
}

\section{Introduction}\label{intro}
Unitary cobordism theory is an extraordinary cohomology theory, which has numerous applications in algebraic topology due to its intrinsic geometric nature. We refer to~\cite{bu-12} and references therein for a thorough survey of the subject. The coefficient ring for this cohomology theory is $U_*(pt)=\Omega^U_*$ and construction of distinguished representatives in every class $a\in\Omega^U_*$ is an important problem in the theory of unitary cobordisms. In the late 1950's F.~Hirzebruch raised a question (\cite{hi-60}), which can be reformulated in terms of unitary cobordisms as follows: when can a class $a\in\Omega^{U}_{*}$  be represented by a \emph{connected} algebraic variety? Already in real dimension $4$ this problem turns out to be connected with deep questions of algebraic geometry and is still open in general.

A challenging task closely related to Hirzebruch's original question is to describe particular manifolds representing the generators of the ring $\Omega^U_*$. There are both classical theorems (see, e.g., Stong's monograph \cite{st-68}), as well as very recent results (cf.\,\cite{wi-13}), concerning this problem. First of all, recall that over $\Q$ the unitary cobordism ring is a polynomial ring generated by the projective spaces $\{\C P^k\}_{k=1}^{\infty}$ (cf.\,\cite[Chapter VII]{st-68}):
\[
\Omega^U_*\otimes \Q\simeq \Q[[\C P^1], [\C P^2], \dots].
\]
So, over the rational numbers the ring $\Omega^U_*\otimes \Q$ has an explicit and simple set of generators. However, the search for multiplicative generators for $\Omega^U_*$ itself turns out to be difficult due to divisibility relations among Chern numbers of stably complex manifolds. As a result many approaches to this and related problems lead to subtle number-theoretical questions (\cite{wi-13, lu-pa-14}). A general result proved independently by Milnor and Novikov gives an efficient (necessary and sufficient) criterion that ensures that a sequence of stably complex manifolds generates the cobordism ring $\Omega^U_*$:

\begin{thm}[Milnor, Novikov, \cite{no-62, st-68}]\label{mn}
The cobordism class of a stably complex manifold $X^{2n}$ may be taken to be a $2n$-dimensional generator iff:
\[
s_n(X^{2n})=
\begin{cases}
\pm 1,\ n\neq p^k-1& \mbox{for any prime number }p; \\
\pm p,\ n=p^k-1& \mbox{for some prime number }p.\end{cases}
\]
\end{thm}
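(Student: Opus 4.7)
The plan is to reduce the criterion to a computation of the image of the $\Z$-linear functional $s_n\colon\Omega^U_{2n}\to\Z$ on the group of indecomposables. By the (already-quoted) structure theorem $\Omega^U_*\simeq\Z[a_1,a_2,\dots]$, the quotient $D_{2n}:=\Omega^U_{2n}/(\Omega^U_{>0}\cdot\Omega^U_{>0})_{2n}$ is infinite cyclic, generated by the class of $a_n$, and $[X^{2n}]$ is a polynomial generator iff its image in $D_{2n}$ is a generator of $\Z$. The Newton polynomial $s_n=\sum\alpha_i^n$ in the Chern roots is primitive: for a product $X\times Y$ with $\dim_\C X,\dim_\C Y>0$ the Chern roots of $T(X\times Y)$ split additively, so $s_n(T(X\times Y))=\pi_X^*s_n(TX)+\pi_Y^*s_n(TY)$, and neither summand contributes to the top Chern number once $n=\dim_\C(X\times Y)$ is fixed. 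Thus $s_n$ vanishes on decomposables, descends to $\bar s_n\colon D_{2n}\to\Z$ with image $d_n\Z$ for some $d_n>0$, and $[X]$ is a polynomial generator exactly when $s_n(X)=\pm d_n$; the theorem is then equivalent to the identification $d_n=1$ if $n+1$ is not a prime power and $d_n=p$ if $n+1=p^k$.

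To pin down $d_n$, I would use an explicit generating set of $D_{2n}$. The projective space $\C P^n$ together with the Milnor hypersurfaces $H_{r,s}\subset\C P^r\times\C P^s$ of bidegree $(1,1)$ for $r+s=n+1$, $1\leq r\leq s$, are classically known to span $D_{2n}$ over $\Z$ (via the Milnor basis of $\Omega^U_*$, or equivalently via the Hurewicz map into $H_*(MU)$). A short calculation using $c(T\C P^n)=(1+x)^{n+1}$ gives $s_n(\C P^n)=n+1$, while the normal bundle sequence
\[
0\to TH_{r,s}\to T(\C P^r\times\C P^s)|_{H_{r,s}}\to\mc O(1,1)|_{H_{r,s}}\to 0
\]
combined with additivity of power sums in the Chern roots yields $s_n(H_{r,s})=-\binom{n+1}{r}$. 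Consequently $d_n=\gcd\bigl\{n+1,\,\binom{n+1}{1},\dots,\binom{n+1}{n}\bigr\}=\gcd\bigl\{\binom{n+1}{r}:1\leq r\leq n\bigr\}$.

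The proof closes with the standard elementary lemma: for $m\geq 2$,
\[
\gcd\Bigl\{\binom{m}{r}:1\leq r\leq m-1\Bigr\}=\begin{cases}p,&m=p^k\text{ is a prime power},\\ 1,&\text{otherwise},\end{cases}
\]
which follows at once from Kummer's theorem on the $p$-adic valuation of binomial coefficients. Applied to $m=n+1$ this produces exactly the dichotomy in the statement. The main obstacle is the middle paragraph: the Chern-number computations by themselves yield only the divisibility $d_n\mid\gcd\bigl\{\binom{n+1}{r}\bigr\}$, and establishing the reverse direction really requires knowing a priori that $\C P^n$ and the Milnor hypersurfaces already generate $D_{2n}$. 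This is a nontrivial input from the structure theory of $\Omega^U_*$ (the Milnor basis theorem) and is the step of the argument with genuine geometric/topological content; once it is in hand, the rest of the proof is formal manipulation plus an arithmetic lemma.
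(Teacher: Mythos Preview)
The paper does not prove this theorem: Theorem~\ref{mn} is stated as a classical result of Milnor and Novikov with references to \cite{no-62, st-68}, and the paper simply invokes it as background for the main construction. There is therefore no ``paper's own proof'' to compare against.

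As a standalone sketch of the classical argument your outline is accurate and honest. The reduction to the indecomposables $D_{2n}\simeq\Z$, the vanishing of $s_n$ on products, the computations $s_n(\C P^n)=n+1$ and $s_n(H_{r,s})=-\binom{n+1}{r}$, and the binomial gcd lemma are all correct. You also correctly flag the one genuine gap: from the Chern-number computations alone you only obtain $d_n\mid\gcd_r\binom{n+1}{r}$; the reverse divisibility requires either (i) the structural input that the $H_{r,s}$ together with $\C P^n$ already generate $D_{2n}$ (Milnor's theorem on $H_*(MU)$ / the Hurewicz image), or (ii) an independent proof that $p\mid s_n(X)$ for every $X$ when $n+1=p^k$ (e.g.\ via mod~$p$ characteristic numbers or $K$-theoretic integrality). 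Either of these is exactly the nontrivial content of the Milnor--Novikov theorem, so your self-diagnosis is on target; the remainder of your sketch is indeed formal.
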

Here $s_n(X^{2n})$ is the Milnor number of $X^{2n}$ (also sometimes referred to as the \emph{top characteristic number}):
\[
s_n(X^{2n}):=\langle t_1^n+\dots+t_n^n, [X^{2n}]\rangle,
\]
where $t_1,\dots,t_n$ are Chern roots of the stably complex tangent bundle $TX$.

An explicit set of generators of the ring $\Omega^U_*$ was first given by Milnor in the 1960's. He proved that suitable linear combinations of bidegree-(1,1) hypersurfaces $H_{i,j}\subset \C P^i\times \C P^j$ generate the unitary cobordism ring. With Milnor's construction one can produce (not connected) algebraic generators of the ring $\Omega^U_*$, \cite{mi-65},~\cite{st-68}. In the category of stably complex manifolds one can define an operation of connected sum. Hence, there exist \emph{connected} stably complex generators of $\Omega^U_*$. In spite of these observations it is natural to ask: \emph{Is it possible to choose connected algebraic varieties as polynomial generators of the ring $\Omega^U_*$?} A positive answer to this question was given by Johnston~\cite{jo-04} in 2004. He produced generators of  $\Omega^U_*$ via sequences of blow-ups of projective varieties along complete intersections in the exceptional divisors.

Another problem related to the construction of ``nice'' generators of $\Omega^U_*$ is the search for polynomial generators with a large symmetry group. Buchstaber, Ray (\cite{bu-ra-98}) and Buchstaber, Panov, Ray (\cite{bu-pa-ra-07}) used a class of \emph{quasitoric} manifolds to construct a representative in any class of the ring $\Omega^U_*$. Quasitoric manifolds (toric varieties in sense of Davis-Januszkiewicz~\cite{da-ja-91}) are connected stably complex manifolds $M^{2n}$ with an effective action of a half-dimensional torus $(S^1)^n$. The key tool used in these papers is the operation of equivariant \emph{box sum} (cf.\,\cite[\S 5]{bu-pa-ra-07}) in the category of quasitoric manifolds, which generalises the operation of connected sum.

From now on $n$ stands for the \emph{complex} dimension of the corresponding complex manifold.

\emph{Toric varieties} are algebraic varieties $X$ with an effective action of an algebraic torus $(\C^*)^{{\rm dim}_\C X}$ having an open dense orbit. Any smooth, compact toric variety represents some element in $\Omega^U_*$, and one can ask whether the generators of the unitary cobordism ring can be represented by toric manifolds. This problem turns out to be more subtle, since the operation of connected sum is not well-defined in the category of toric varieties. A significant advance towards the positive answer to this question was recently made by Wilfong in \cite{wi-13}. He constructed toric generators for $\Omega^U_*$ (i.e., manifolds satisfying the conditions of Theorem~\ref{mn}) in all odd dimensions and in dimensions one less then a power of a prime. In his construction Wilfong started with a certain tower of projectivised bundles over $\C P^1$ (generalised Bott towers) and used sequences of equivariant blow-ups in $(\C^*)^n$-fixed points and along invariant rational curves.

%

To construct polynomial generators for $\Omega^U_*$, it remains to find toric manifolds $X^{n}$ satisfying the conditions of Theorem~\ref{mn} in all even dimensions $n$ such that $n+1$ is not a power of a prime (the minimal such $n$ is 14). 
Our approach to this problem combines the ideas of~\cite{jo-04} and~\cite{wi-13}. Similarly to Johnston and Wilfong we study blow-ups of algebraic complex manifolds. However, unlike~\cite{wi-13} we define a family of birational modifications $B_k(X)\to X$ over any complex manifold $X$. Equivariant versions of these morphisms are well-defined in the category of toric varieties (see Section \ref{sec:equivariant_modification}). The modification $B_k$ is a \emph{sequential blow-up} $B_k(X)\to Bl_x{X}\to X$, where $Bl_x{X}\to X$ is a blow-up at a point and $B_k(X)\to Bl_x{X}$ is a blow-up along a submanifold $\C P^{k}\subset E_x$ of an exceptional divisor $E_x\subset Bl_x{X}$. There is freedom in the choice of the point $x\in X$ and the subvariety $\C P^{k}\subset E_x$ to blow up, however, as it turns out, the difference $[B_k(X)]-[X]$ in $\Omega^U_*$ depends only on the numbers $n$ and $k$ and does not depend on $X$. In particular, the numbers $s_{k,n}=s_n(B_k(X))-s_n(X)$ are well-defined. The latter measure the change of the Milnor number of an $n$-dimensional manifold $X$ under the operations $B_k$. The crucial property of the numbers $s_{k,n}$ in the dimensions of interest to us is that $\mathrm{gcd}(s_{0,n},\dots,s_{n-2,n})=1$. This fact allows us to start from a smooth projective toric variety $X$ with $s_n(X)$ large enough and apply a sequence of modifications $B_{k_i}$ to get a smooth projective toric variety with the Milnor number~1. The main result of this paper can be formulated as follows:

\begin{thm}
There exist smooth projective toric varieties $\{X^{n}\}^\infty_{n=1}$ of complex dimension $n$ such that 
\[
\Omega^{U}_{*}=\Z[[X_1],[X_2],\dots].
\]
\end{thm}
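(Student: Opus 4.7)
The plan is to reduce the problem to the remaining dimensions and then use the equivariant modifications $B_k$ to tune the Milnor number to the value prescribed by Theorem~\ref{mn}. Wilfong's construction in~\cite{wi-13} already supplies smooth projective toric generators in every odd complex dimension, and in every complex dimension $n$ with $n+1$ a prime power. Thus, in view of Theorem~\ref{mn}, I only need to exhibit, for each even $n\geq 14$ with $n+1$ not a prime power, a smooth projective toric $n$-fold $X^n$ with $s_n(X^n)=\pm 1$.

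For each such $n$, I would start from a convenient smooth projective toric base variety $Y_0^n$ whose Milnor number can be computed directly --- for example $\C P^n$, for which $s_n=n+1$, or a generalised Bott tower as used in~\cite{wi-13} --- and then apply a sequence of equivariant modifications $B_{k_1},B_{k_2},\dots,B_{k_N}$ at torus-fixed points and along invariant projective subspaces of the corresponding exceptional divisors. Since each $B_k$ is a composition of two blow-ups along smooth torus-invariant centres, the intermediate varieties stay smooth, projective and toric. By the key fact announced in the abstract --- the independence of $[B_k(X)]-[X]\in\Omega^U_*$ from the ambient manifold $X$ --- the resulting Milnor number equals
\[
s_n\bigl(B_{k_N}\circ\dots\circ B_{k_1}(Y_0^n)\bigr)\;=\;s_n(Y_0^n)+\sum_{i=1}^N s_{k_i,n},
\]
which is a purely numerical function of the sequence $(k_1,\dots,k_N)\in\{0,\dots,n-2\}^N$.

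It then remains to solve a purely arithmetic problem: represent $\pm 1-s_n(Y_0^n)$ as a sum of the form $\sum_i s_{k_i,n}$. The crucial input is the gcd condition $\gcd(s_{0,n},\dots,s_{n-2,n})=1$ announced in the introduction. Once this is known, I would separate the $s_{k,n}$ by sign, absorb the negative contributions by first allowing the Milnor number to move in one direction, and then invoke the Sylvester--Frobenius theorem to write any sufficiently large positive integer as a non-negative integer combination of the positive values among $\{|s_{k,n}|\}$. Choosing the starting variety $Y_0^n$ so that $s_n(Y_0^n)$ lies well beyond the Frobenius threshold (if necessary, by replacing $Y_0^n$ with a toric variety of the same dimension but larger Milnor number, e.g.\ a suitable generalised Bott tower) then delivers the required $X^n$.

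The main obstacle is the verification of the two structural claims about the $B_k$'s on which everything hinges: first, the universality statement that $[B_k(X)]-[X]$, and in particular $s_{k,n}$, is independent of $X$ --- this should reduce by equivariant localisation and a normal-bundle computation to a universal local model $B_k(\C^n)\to\C^n$; and second, that $\gcd(s_{0,n},\dots,s_{n-2,n})=1$ in every dimension of interest, which demands an explicit closed form (or generating-function expression) for $s_{k,n}$ together with a non-trivial number-theoretic argument covering precisely those even $n$ for which $n+1$ is not a prime power.
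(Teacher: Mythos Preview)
Your proposal is correct and follows essentially the same strategy as the paper: reduce via Wilfong to even $n$ with $n+1$ not a prime power, use the universality of $s_{k,n}$ together with the $\gcd$ lemma and a Frobenius-type argument, and start from a toric variety with large $s_n$. The only noteworthy deviation is that the paper establishes the independence of $[B_k(X)]-[X]$ from $X$ not by equivariant localisation but via Hitchin's cobordism blow-up formula (Proposition~\ref{prop:hit}), which immediately gives $[B_k(X)]-[X]=-[D_{k,n}]-[\P(\C^n\oplus\overline{\C})]$ and makes the explicit computation of $s_{k,n}$ straightforward.
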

In the category of toric varieties the modifications $B_k$ have a further interesting property: toric manifolds $B_k(X)$ and $B_{n-k-2}(X)$ have combinatorially equivalent moment polytopes, provided that the blown-up invariant subvarieties $\C P^{k}\subset E$ and $\C P^{n-k-2}\subset E$ of an exceptional divisor $E\subset Bl_x{X}$ are properly chosen (see Proposition \ref{equiv}). This property allows us to formulate a characterisation theorem for the 2-parameter Todd genus $\chi_{a,b}$ in terms of \emph{combinatorial rigidity} (see Theorem \ref{thm:2todd_toric}).

%
%
%
%
%
%

\section{Modifications of complex manifolds and relations in the unitary cobordism ring}
\subsection{Projectivisations of complex vector bundles}\label{sec:projectivisations}
We start this Section with a brief review of important facts about projectivisations of vector bundles and their stably complex structures.

\begin{thm}[Leray-Hirsch, see {\cite[$\S$ 20]{bo-tu-82}}]
Let $\xi\to B$ be a complex $(n-k+1)$-dimensional vector bundle over a manifold $B$ of real dimension $2k$. Consider the fiberwise projectivisation $p\colon \mathbb P(\xi)\to B$ of the bundle $\xi$. Denote by $v=c_1(\gamma)\in H^2(\mathbb P(\xi),\Z)$ the first Chern class of the fiberwise line bundle $\gamma=\mc O(1)$ over $\P(\xi)$. Then there is an isomorphism of graded rings:
\begin{equation}\label{eq:leray-hirsch}
H^{*}(\mathbb{P}(\xi),\Z)\cong H^{*}(B,\Z)[v]/(v^{n-k+1}+c_{1}(\xi)v^{n-k}+\dots+c_{n-k+1}(\xi)).
\end{equation}
\end{thm}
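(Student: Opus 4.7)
The plan is to apply the classical Leray-Hirsch theorem for fiber bundles to pin down the $H^*(B,\Z)$-module structure on $H^*(\P(\xi),\Z)$, and then extract the defining polynomial relation from the Whitney sum formula applied to the tautological exact sequence on $\P(\xi)$.

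First I would observe that $p\colon \P(\xi)\to B$ is a locally trivial fiber bundle with fiber $\C P^{n-k}$, and that the restriction of $v=c_1(\mc O(1))$ to any fiber is the standard hyperplane class. Consequently $1,v,v^2,\dots,v^{n-k}$ restrict fiberwise to an additive basis of $H^*(\C P^{n-k},\Z)$, so the classical Leray-Hirsch theorem (proved via Mayer-Vietoris induction over a trivialising open cover, or equivalently via degeneration of the Serre spectral sequence) yields an isomorphism of graded $H^*(B,\Z)$-modules
\begin{equation}
\bigoplus_{i=0}^{n-k} H^{*-2i}(B,\Z)\cdot v^i \xrightarrow{\;\sim\;} H^*(\P(\xi),\Z).
\end{equation}
This takes care of the additive content: $H^*(\P(\xi),\Z)$ is a free $H^*(B,\Z)$-module of rank $n-k+1$ with basis $1,v,\dots,v^{n-k}$.

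Next I would extract the multiplicative relation from the tautological short exact sequence of complex vector bundles on $\P(\xi)$,
\begin{equation}
0\to \mc O(-1)\to p^*\xi\to Q\to 0,
\end{equation}
in which $Q$ has complex rank $n-k$. The Whitney sum formula gives $c(p^*\xi)=(1-v)\,c(Q)$, and inverting $(1-v)$ in the total Chern class produces recursively $c_i(Q)=v^i+p^*c_1(\xi)\,v^{i-1}+\dots+p^*c_i(\xi)$ for $0\le i\le n-k$. Since $Q$ has rank $n-k$, the class $c_{n-k+1}(Q)$ must vanish, which by the same recursion yields precisely
\begin{equation}
v^{n-k+1}+p^*c_1(\xi)\,v^{n-k}+\dots+p^*c_{n-k+1}(\xi)=0
\end{equation}
in $H^{2(n-k+1)}(\P(\xi),\Z)$.

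Combining the two steps, the natural graded ring homomorphism
\begin{equation}
H^*(B,\Z)[v]\big/\bigl(v^{n-k+1}+c_1(\xi)v^{n-k}+\dots+c_{n-k+1}(\xi)\bigr)\to H^*(\P(\xi),\Z)
\end{equation}
is well-defined (by the relation just obtained) and is an isomorphism of free $H^*(B,\Z)$-modules of the same rank $n-k+1$ with matching bases $1,v,\dots,v^{n-k}$, hence an isomorphism of graded rings. The conceptual heart of the argument is the Leray-Hirsch module identification; the polynomial relation then falls out cleanly from the Chern-class calculus once the tautological sequence is in hand, so I do not anticipate any serious obstacle beyond the (standard but not entirely trivial) technical requirement of either paracompactness of $B$ or a reasonable CW-structure needed to run the Leray-Hirsch step itself.
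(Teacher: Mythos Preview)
Your proposal is correct and follows essentially the same route as the paper: the paper simply remarks that the statement is a corollary of the general Leray--Hirsch theorem (degeneration of the Serre spectral sequence) together with the vanishing of $c_{n-k+1}(p^*\xi\otimes\mc O(1))$, the latter holding because $p^*\xi\otimes\mc O(1)$ splits off a trivial line bundle. Your version extracts the same polynomial relation from the vanishing of $c_{n-k+1}(Q)$ for the tautological quotient $Q$; tensoring the tautological sequence by $\mc O(1)$ shows the two vanishing statements are equivalent and yield the identical relation $\sum_i p^*c_i(\xi)\,v^{n-k+1-i}=0$.
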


This particular statement is a corollary of the general Leray-Hirsch theorem about the degeneration of the Serre spectral sequence and the vanishing of $c_{n-k+1}(p^*\xi\otimes\mc O(1))=0$. The latter holds, since the bundle $p^*\xi\otimes\mc O(1)$ splits off a trivial line bundle.

Note that $H^*(\mathbb{P}(\xi),\Z)$ is a $H^*(B,\Z)$-module via homomorphism $p^*\colon H^*(B,\Z)\to H^*(\mathbb{P}(\xi),\Z)$ and isomorphism~\eqref{eq:leray-hirsch} respects this structure. Further we identify $H^*(B,\Z)$ with its image $p^*(H^*(B,\Z))\subset H^*(\mathbb{P}(\xi),\Z)$. Assume that $B$ is compact and oriented. Then for any class $\omega\in H^{2(n-l)}(B,\Z)$ one can integrate the top class $\omega\cdot v^l$ over $\mathbb{P}(\xi)$. With the use of the Leray-Hirsch theorem one can show that (see~\cite[\S 2.2]{sc-14})

\begin{equation}\label{eq:integration}
\langle{\omega\cdot v^l, [\mathbb P(\xi)}]\rangle=\langle\omega\cdot c^{-1}(\xi), [B]\rangle,
\end{equation}
where $c^{-1}(\xi)\in H^*(B,\Z)$ is the total Segre class, i.e., the multiplicative inverse of the total Chern class $c(\xi)=1+c_1(\xi)+\dots+c_{n-k+1}(\xi)$.

If the base manifold $B$ has a complex structure, the projectivisation $\P(\xi)$ is equipped with a canonical stably complex structure
\begin{equation}\label{eq:stably-complex-standard}
T\mathbb P(\xi)\oplus\C\simeq (p^*\xi\otimes \gamma)\oplus p^*TB.
\end{equation}
Using the short exact sequence $T_{\rm vert}\mathbb P(\xi)\to T\mathbb P(\xi)\to p^*TB$ and the isomorphism $T_{\rm vert}\mathbb P(\xi)\simeq\mathop{\rm Hom}(\mc O(-1),p^*\xi/\mc O(-1))$, we can show that for a holomorphic vector bundle $\xi$ the stably complex structure~\eqref{eq:stably-complex-standard} is (stably) equivalent to the canonical complex structure on the complex manifold $\P(\xi)$. However, in what follows, we will be interested in \emph{non-standard} stably complex structures on complex projectivisations $\P(\xi)$.

\begin{definition}[Non-standard stably complex structure on $\P(\zeta\oplus \C)$]\label{def:stably-complex-nonstandard}
Consider a split vector bundle $\xi=\zeta\oplus\C$ over a base $B$. Define a \emph{non-standard} stably complex structure on the manifold $\P(\zeta\oplus\C)$ via an isomorphism of real vector bundles:
\begin{equation}\label{eq:stably-complex-nonstandard}
T\P(\zeta\oplus\C)\oplus\C\simeq (p^*\zeta\otimes \gamma)\oplus\gamma^*\oplus p^*TB.
\end{equation}
Slightly abusing notations, we denote the manifold $\P(\zeta\oplus\C)$ equipped with this stably complex structure by $\P(\zeta\oplus\overline{\C})$.
\end{definition}

The difference between the non-standard and standard stably complex structures is in the use of $\gamma^*$ instead of $\gamma$ in the formula above. A fiber of the stably complex bundle $\P(\zeta\oplus\overline{\C})$ is a projective space $\C P^{\dim\zeta}$ with a non-standard stably complex structure given by an isomorphism of real vector bundles:
\begin{equation}\label{eq:stably-complex-nonstandard-cp}
T\C P^{\dim\zeta}\oplus\C\simeq \gamma^{\oplus (\dim\zeta)}\oplus\gamma^*.
\end{equation}

\subsection{Blow-ups of complex submanifolds}\label{subsec:cmplxblowup}

Consider a smooth compact complex manifold $X$ and its complex submanifold $Z\subset X$. Denote by $Bl_{Z}X$ the blow-up of $X$ along $Z$.
The blow-up is a \emph{local} operation, i.e., it depends only on the tubular neighbourhood of the submanifold $Z\subset X$. Hence it is reasonable to expect that the difference $[Bl_{Z}X] - [X]$ is determined by the normal vector bundle $\nu(Z\subset X)$.

\begin{pr}[{Hitchin \cite[\S 4.5]{hi-74}}]\label{prop:hit}
Let $X$ and $Z$, with $Z\subset X$, be smooth compact complex manifolds of dimensions $n$ and $k$, resp. Consider a blow-up $\pi: Bl_{Z}X\rightarrow X$ along $Z$. Then the difference of classes of manifolds $Bl_{Z}X$ and $X$ in the unitary cobordism ring is:
\begin{equation}\label{eq:cobordism_blowup_identity}
[Bl_{Z}X] - [X] = -[\mathbb{P}(\nu(Z\subset X)\oplus\overline{\mathbb{C}})],
\end{equation}
where $\nu(Z\subset X)$ is a normal bundle to $Z$, and the projectivisation $\mathbb{P}(\nu(Z\subset X)\oplus\overline{\mathbb{C}})$ is equipped with the non-standard stably complex structure~\eqref{eq:stably-complex-nonstandard}.
\end{pr}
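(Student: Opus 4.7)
My plan is to prove~\eqref{eq:cobordism_blowup_identity} by a cut-and-paste argument exploiting the locality of the blow-up. Since $Bl_Z X$ agrees with $X$ away from a tubular neighbourhood of $Z$, we have smooth decompositions $X = (X \setminus U) \cup_{S(\nu)} D(\nu)$ and $Bl_Z X = (X \setminus U) \cup_{S(\nu)} D(\gamma^*)$, where $U$ is a tubular neighbourhood of $Z$ in $X$, $D(\nu) \to Z$ is the unit disc bundle of $\nu$, $D(\gamma^*) \to \mathbb{P}(\nu)$ is the unit disc bundle of the normal bundle $\mathcal{O}_E(-1) = \gamma^*$ of the exceptional divisor $E = \mathbb{P}(\nu)$ in $Bl_Z X$, and the common boundary is $\partial D(\nu) = S(\nu) \cong S(\gamma^*) = \partial D(\gamma^*)$ via the standard identification. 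The cobordism-theoretic cut-and-paste principle then yields
\[
[X] - [Bl_Z X] \;=\; \bigl[\, D(\nu) \cup_{S(\nu)} \overline{D(\gamma^*)} \,\bigr] \quad \text{in} \ \Omega^U_*,
\]
where the overline indicates the orientation reversal needed so that the two pieces glue to a closed stably complex manifold.

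To identify the right-hand side with $[\mathbb{P}(\nu \oplus \overline{\mathbb{C}})]$, I will use the standard decomposition of the projective bundle $\mathbb{P}(\nu \oplus \mathbb{C}) = D(\nu) \cup_{S(\nu)} D(\gamma)$ into the affine chart and the tubular neighbourhood of its hyperplane at infinity $\mathbb{P}(\nu)$, whose normal bundle in $\mathbb{P}(\nu \oplus \mathbb{C})$ is $\gamma = \mathcal{O}_{\mathbb{P}(\nu)}(1)$. The canonical isomorphism $\overline{\gamma^*} \simeq \gamma$ of complex line bundles (complex conjugation of the dual) identifies $\overline{D(\gamma^*)}$ smoothly with $D(\gamma)$, so the cut-and-paste manifold is diffeomorphic to $\mathbb{P}(\nu \oplus \mathbb{C})$. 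However, the orientation reversal encodes a complex conjugation on the normal $\mathbb{C}$-direction of $\mathbb{P}(\nu) \subset \mathbb{P}(\nu \oplus \mathbb{C})$; tracing this through~\eqref{eq:stably-complex-standard} replaces the summand $\gamma$ corresponding to the $\mathbb{C}$-factor of $\nu \oplus \mathbb{C}$ by its dual $\gamma^*$, producing precisely the non-standard structure~\eqref{eq:stably-complex-nonstandard} of Definition~\ref{def:stably-complex-nonstandard}. As a numerical sanity check, in the minimal case $Z = \mathrm{pt}$, $X = \mathbb{C}P^2$, $Bl_Z X = F_1$, one verifies the Chern-number identity $(c_1^2, c_2)(\mathbb{C}P^2) - (c_1^2, c_2)(F_1) = (9,3) - (8,4) = (1,-1) = (c_1^2, c_2)(\mathbb{C}P^2_{\mathrm{ns}})$, confirming that the non-standard twist is indeed what appears.

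The main obstacle is the rigorous tracking of stably complex structures through the cut-and-paste: verifying that the orientation reversal on $D(\gamma^*)$ really twists the $\mathbb{C}$-factor in~\eqref{eq:stably-complex-standard} to yield~\eqref{eq:stably-complex-nonstandard}, rather than returning the standard structure. A cleaner alternative proceeds by \emph{deformation to the normal cone}: one first uses the family $Bl_{Z \times \{0\}}(X \times \mathbb{C}P^1) \to \mathbb{C}P^1$, whose generic fibre is $X$ and whose central fibre is the normal-crossings union $Bl_Z X \cup_{\mathbb{P}(\nu)} \mathbb{P}(\nu \oplus \mathbb{C})$, to show that $[Bl_Z X] - [X]$ depends only on the normal bundle $\nu$. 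One then verifies~\eqref{eq:cobordism_blowup_identity} on the universal example $X = \mathbb{P}(\nu \oplus \mathbb{C})$ with $Z$ the zero section, where both sides can be computed explicitly via Leray-Hirsch~\eqref{eq:leray-hirsch} and the integration formula~\eqref{eq:integration}, reducing the identity to a direct characteristic-number calculation.
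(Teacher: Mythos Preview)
The paper does not prove this proposition; it is quoted from Hitchin~\cite[\S 4.5]{hi-74} and used as a black box. Your cut-and-paste outline is essentially the classical argument: decompose $X$ and $Bl_ZX$ into a common piece $X\setminus U$ and the differing disc bundles $D(\nu)$, $D(\gamma^*)$, then recognise the difference as the closed manifold $D(\nu)\cup_{S(\nu)}\overline{D(\gamma^*)}\cong\P(\nu\oplus\C)$ carrying a twisted stably complex structure. The numerical check for $\C P^2$ is correct and reassuring.

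That said, the step you yourself flag as the ``main obstacle'' is exactly where the content lies, and your sketch leaves it underdetermined. The phrase ``orientation reversal encodes a complex conjugation on the normal $\C$-direction'' must be made precise: in $\Omega^U_*$ the additive inverse is represented by the \emph{conjugate} stably complex structure, and one has to verify that after gluing $D(\nu)$ (with the complex structure inherited from $X$) to $D(\gamma^*)$ (with the conjugate of the structure inherited from $Bl_ZX$) along $S(\nu)$, the resulting stable isomorphism on $T\P(\nu\oplus\C)\oplus\C$ is globally~\eqref{eq:stably-complex-nonstandard}, not merely something agreeing with it over each piece separately. The honest way to do this is to write down the stably complex structure on an explicit cobordism $W$ witnessing $[X]-[Bl_ZX]=[\,\cdot\,]$ and read off the induced structure on the extra boundary component. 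Your deformation-to-the-normal-cone alternative accomplishes precisely this, and more cleanly: the total space $Bl_{Z\times\{0\}}(X\times\C P^1)$ is a genuine complex manifold, so all complex structures are unambiguous, and comparing the generic fibre $X$ with the central fibre $Bl_ZX\cup_{\P(\nu)}\P(\nu\oplus\C)$ yields the identity directly once one observes that the component $\P(\nu\oplus\C)$ sits in the total space with normal bundle $\mathcal O(-1)$ along $\P(\nu)$, forcing the $\overline{\C}$ twist. This is closer to Hitchin's original argument; the ``reduce to a universal example and compute Chern numbers'' fallback you propose at the end is then unnecessary.
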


\begin{ex}[Blow-up at a point $Bl_x{X}\to X$]
Apply Proposition~\ref{prop:hit} to the blow-up $Bl_x{X}$ of a manifold $X$ at a point $x\in X$. In this case the normal bundle is trivial $\nu=\C^{\dim_\C X}$. So the formula~\eqref{eq:cobordism_blowup_identity} reduces to:
\begin{equation}\label{eq:cobordism_blowup_point}
[Bl_x{X}]-[X] = - [\P(\C^{\dim_\C X}\oplus\overline\C)].
\end{equation}
\end{ex}

It is well-known that the blow-up $\pi\colon Bl_{Z}X\to X$ of a manifold $X$ is obtained from $X$ by adding an exceptional divisor $E=\pi^{-1}(Z)\simeq\mathbb{P}(\nu(Z\subset X))$ with a normal bundle $\gamma=\mc O(-1)$ (cf.\,\cite[\S 6]{gr-ha-78}). In particular, if $\pi\colon Bl_x X\to X$ is a blow-up at a point $x\in X$ of some $n$-dimensional complex manifold, then the exceptional divisor $E=\pi^{-1}(x)\simeq \C P^{n-1}$ has normal bundle $\nu(E\subset Bl_x{X})\simeq \mc O(-1)$.

Now we define our key tool~--- a family of birational modifications $B_k(X)\to X$.
\begin{definition}[Modifications $B_k(X)$]\label{def:modification}
For any smooth complex manifold $X$ of complex dimension $n$ consider a blow-up $\pi\colon Bl_x{X}\to X$ at a point $x\in X$. Fix a number $0\le k\le n-2$ and pick a projective subspace $Z^k\simeq \C P^k$ in the exceptional divisor $E=\pi^{-1}(x)\simeq \C P^{n-1}$. 
Define the $k$-modification $B_{k}(X)$ to be the blow-up $Bl_Z(Bl_x{X})$ of the manifold $Bl_x{X}$ along $Z$.
\end{definition}

\begin{rem}
The notation $B_k(X)$ is ambiguous, since it does not specify the blown-up point $x\in X$ and submanifold $Z\subset E$, while different choices of $x\in X$ and $Z\subset E$ result into different complex manifolds $B_k(X)$. However, in this paper we are interested mainly in the cobordism class of $[B_k(X)]$, and by Proposition~\ref{prop:hit} the latter does not depend on the choices of $x$ and $Z$. 
\end{rem}

The normal bundle $\nu(Z\subset Bl_x{X})$ is isomorphic to $\mc O(-1)\oplus\mc O(1)^{\oplus(n-k-1)}$. Proposition~\ref{prop:hit} gives the following formula for the difference of cobordism classes $[B_k(X)]$ and $[Bl_x{X}]$:
\[
[B_k(X)]-[Bl_x{X}]=-[\P(\mc O(-1)\oplus\mc O(1)^{\oplus(n-k-1)}\oplus\overline{\C})].
\]

The stably complex manifold $D_{k,n}:=\P(\mc O(-1)\oplus\mc O(1)^{\oplus(n-k-1)}\oplus\overline{\C})$ is the projectivisation of an $(n-k+1)$-dimensional vector bundle $\xi=\mc O(-1)\oplus\mc O(1)^{\oplus(n-k-1)}\oplus\C$ over $Z\simeq\C P^k$. It follows from the definition of $D_{k,n}$ that
\begin{equation}\label{eq:sn_diff}
s_n(B_k(X))-s_n(Bl_x{X})=-s_n(D_{k,n}).
\end{equation}
In Section~\ref{sec:compute_sn} we compute the Milnor numbers $s_n(D_{k,n})$.

\subsection{Equivariant modifications \texorpdfstring{$B_{k}$}{Bk} of toric varieties}\label{sec:equivariant_modification}
We have defined the family of modifications $B_k(X)\to X$, $k=0,\dots,n-2$ for any compact complex manifold $X$. Now we describe the equivariant analogues of the operations $B_k(X)$ in the category of smooth projective toric varieties.

\begin{definition}
A smooth projective complex manifold $X$ is called (projective) \emph{toric}, if it admits an effective action of an algebraic torus $(\C^*)^{\dim_\C X}$ with an open and dense orbit.
\end{definition}

Any $n$-dimensional projective toric variety is uniquely determined by its underlying Delzant polytope $P$. The polytope $P$ is the image of the moment map $\mu\colon X\to \R^n$ for the action of a compact torus $U(1)^n\subset (\C^*)^n$ (for details cf.\,\cite[Chapter 5]{bu-pa-15}).

The modification $B_k(X)$ is determined by the choice of the point $x\in X$ and the $k$-dimensional projective subspace $Z=\C P^k$ in the exceptional divisor $E\subset Bl_x{X}$. Suppose that $X=X_P$ is a projective toric variety. It is well-known that the blow-up of $X$ at a fixed point~$x$ is an equivariant modification $Bl_x X\to X$. The underlying polytope for the manifold $Bl_x X$ is ${\rm cut}_p P$, i.e., the polytope obtained from $P$ by truncation of a vertex $p$ corresponding to the fixed point $x\in X$. The exceptional divisor $E\subset Bl_x X$ is a $(\C^*)^n$-invariant submanifold in $Bl_x X$. Similarly to the blow-up at a point, the blow-up $B_k(X)=Bl_Z(Bl_x X)\to X$ along any $(\C^*)^n$-invariant submanifold $Z\simeq \C P^k$, $Z\subset E\subset Bl_x(X)$ is equivariant. The manifold $B_k(X)$ is toric and corresponds to a polytope $P'_{k}={\rm cut}_{S^k}({\rm cut}_p P)$. This polytope is obtained from $P$ by two successive truncations (of a vertex $p$ and of one of the new $k$-dimensional faces $S^k$).

So we have proved the following:

\begin{pr}\label{st:bk-toric}
In the category of toric varieties the variety $B_k(X)=Bl_Z(Bl_x X)$ is toric and the modification $B_k(X)\to X$ is equivariant, provided the point $x\in X$ and the submanifold $Z\subset Bl_x X$ are invariant under the action of the algebraic torus $(\C^*)^{\dim X}$.
\end{pr}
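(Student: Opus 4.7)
The plan is to exhibit both steps of the sequential blow-up $X \to Bl_x X \to B_k(X)$ as equivariant morphisms of toric varieties and then to describe the Delzant polytope of the result. The guiding principle is the standard fact that if $Y$ is a smooth projective toric variety and $W \subset Y$ is a smooth $(\C^*)^n$-invariant subvariety, then the ideal sheaf $\mc I_W$ is torus-equivariant; by the universal property of blow-ups the action of $(\C^*)^n$ on $Y$ lifts canonically to an action on $Bl_W Y$, the blow-down map $Bl_W Y \to Y$ is equivariant, and $Bl_W Y$ is itself a smooth projective toric variety (the open dense orbit of $Y \setminus W$ maps to an open dense orbit of $Bl_W Y$). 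Moreover, the Delzant polytope of $Bl_W Y$ is obtained from that of $Y$ by truncating the face dual to $W$; this is the standard dictionary between invariant subvarieties and faces (cf.\ \cite[Chapter 5]{bu-pa-15}).

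First I would apply this principle with $Y = X_P$ and $W = \{x\}$, the chosen invariant fixed point, which corresponds to a vertex $p$ of the Delzant polytope $P$. It follows immediately that $Bl_x X$ is toric, that $Bl_x X \to X$ is $(\C^*)^n$-equivariant, and that the associated polytope is $\mathrm{cut}_p P$. Smoothness of $X$ at $x$ guarantees that the edges of $P$ emanating from $p$ form a lattice basis, so the new facet $F \subset \mathrm{cut}_p P$ cut out by the truncation is a standard $(n-1)$-simplex; it corresponds exactly to the exceptional divisor $E \simeq \C P^{n-1}$, and its $k$-dimensional faces $S^k \subset F$ parametrise precisely the invariant subvarieties $Z \simeq \C P^k$ of $E$. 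I would then apply the principle a second time with $Y = Bl_x X$ and $W = Z$: this yields that $B_k(X) = Bl_Z(Bl_x X)$ is a smooth projective toric variety, that $B_k(X) \to Bl_x X$ is equivariant, and that its polytope is $P'_k = \mathrm{cut}_{S^k}(\mathrm{cut}_p P)$. Composing the two equivariant morphisms produces the equivariant modification $B_k(X) \to X$.

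The only delicate point is the lift of the torus action along a blow-up with invariant centre; I would justify it via the universal property of blow-ups (any automorphism of $Y$ preserving $W$ induces a unique automorphism of $Bl_W Y$ over it, applied element-wise to $(\C^*)^n$), or, equivalently, in the toric picture by verifying that $\mathrm{cut}_{S^k}(\mathrm{cut}_p P)$ is still Delzant and observing that the toric variety it determines is canonically identified with $Bl_Z(Bl_x X)$ via the face/subvariety dictionary. Everything else is a direct reading of this dictionary and presents no serious obstacle.
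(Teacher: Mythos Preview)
Your proposal is correct and follows essentially the same argument as the paper: the paper's proof is the paragraph immediately preceding the proposition, which observes that the blow-up at an invariant point is the well-known equivariant vertex truncation $\mathrm{cut}_p P$, that the exceptional divisor $E$ is invariant, and that the second blow-up along an invariant $Z\simeq\C P^k\subset E$ is again an equivariant face truncation giving $\mathrm{cut}_{S^k}(\mathrm{cut}_p P)$. Your write-up is slightly more explicit about the universal property of blow-ups and the face/subvariety dictionary, but the content and route are the same.
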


From now on we consider only equivariant blow-ups and modifications of toric varieties.

\section{Construction of toric polynomial generators of the ring \texorpdfstring{$\Omega^U_*$}{OU}}

\subsection{Proof of main Theorem}

In this Section for every even dimension $n$ such that $n+1$ is not a power of a prime we construct a toric manifold of complex dimension $n$ with Milnor number $s_n(X)=1$. These are precisely the dimensions not covered by the results of~\cite{wi-13}. 
Let us define
\[
s_{k,n}=s_n(B_k(X))-s_n(X).	
\]
It follows from equations~\eqref{eq:cobordism_blowup_identity} and~\eqref{eq:cobordism_blowup_point} that the numbers $s_{k,n}$ do not depend on the choice of the manifold~$X$. We will need the following key lemma to prove the existence of toric polynomial generators of the unitary cobordism ring:

\begin{lm}\label{lm:key}
Let $n$ be an even number, such that $n+1$ is not a power of a prime. Then
\[
\mathrm{gcd}(s_{0,n},s_{1,n},\dots,s_{n-2,n})=1.
\]
\end{lm}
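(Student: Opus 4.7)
The plan is in three steps: collapse the lemma to a computation of $s_n(D_{k,n})$, extract a closed generating-function expression, and then prove non-vanishing modulo each prime dividing $n+1$.

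First, applying Proposition~\ref{prop:hit} at both stages of the sequential blow-up $B_k(X)\to Bl_x X\to X$ and combining with~\eqref{eq:sn_diff} gives
\[
s_{k,n} \;=\; -\,s_n(D_{k,n}) \;-\; s_n\bigl(\P(\C^n\oplus\overline{\C})\bigr),
\]
and a direct Chern-root computation via~\eqref{eq:stably-complex-nonstandard-cp} yields $s_n(\P(\C^n\oplus\overline{\C}))=n+1$ using $n$ even. Second, via~\eqref{eq:stably-complex-nonstandard} and the stable equivalence $T\C P^k\oplus\C\simeq\mathcal{O}(1)^{k+1}$, the Chern roots of $TD_{k,n}\oplus\C$ are $v-u$, $u+v$ with multiplicity $n-k-1$, $-v$, and $u$ with multiplicity $k+1$, where $u$ denotes the hyperplane class on $\C P^k$ and $v=c_1(\gamma)$. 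Thus (using $n$ even)
\[
s_n(D_{k,n}) \;=\; (v-u)^n + (n-k-1)(u+v)^n + v^n + (k+1)u^n,
\]
and substituting into~\eqref{eq:integration} with $c(\xi)=(1-u)(1+u)^{n-k-1}$ collapses this to
\[
s_n(D_{k,n}) \;=\; [u^k]\,\frac{(1-u)^n + (n-k-1)(1+u)^n + 1}{(1-u)(1+u)^{n-k-1}}.
\]

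Direct evaluation at $k=1$ yields $s_n(D_{1,n})=0$, hence $s_{1,n}=-(n+1)$ already appears among the $s_{k,n}$, so $\gcd(s_{0,n},\dots,s_{n-2,n})$ divides $n+1$. It therefore suffices to show that for every prime $p\mid n+1$ there exists $k\in\{0,2,\dots,n-2\}$ with $p\nmid s_n(D_{k,n})$. Fix such a prime, write $n+1=p^{a}m$ with $p\nmid m$, and note that the hypothesis forces $m\ge 2$. I would then reduce the generating function modulo $p$ using the Frobenius identity $(1\pm u)^{p^{a}}\equiv 1\pm u^{p^{a}}\pmod{p}$: writing $n-k-1=qp^{a}+r$ with $0\le r<p^{a}$, the denominator becomes $(1-u)(1+u^{p^{a}})^{q}(1+u)^{r}$ mod $p$, and the two $n$-th-power terms of the numerator factor through $(1\mp u^{p^{a}})^{m}/(1\mp u)$. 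The $[u^{k}]$-coefficient is then a finite Lucas-style sum determined by the base-$p$ digits of $k$ and $n-k-1$, and the plan is to locate $k$ in an appropriate arithmetic progression modulo $p^{a}$ (coupled with a fitting quotient $q$) that activates the factor $m\ge 2$, which is the arithmetic content of ``$n+1$ is not a $p$-power''.

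The main obstacle is this final step: producing an explicit $k\in\{0,\dots,n-2\}$ that yields a non-vanishing residue, uniformly in $(a,m)$. The range constraint is comfortable because $m\ge 2$ forces $n\ge 2p^{a}-1$, but the case analysis of $k\bmod p^{a}$ versus $q\bmod p$ requires care, especially since the middle term $(n-k-1)(1+u)^{n}$ vanishes mod $p$ exactly when $k\equiv -2\pmod{p}$, and one must balance this cancellation against the structure of the remaining two terms. Ultimately the argument should mirror the classical fact $\gcd\bigl(\binom{n+1}{j}\bigr)_{1\le j\le n}=1$ iff $n+1$ is not a prime power, which is precisely the arithmetic heart of the Milnor--Novikov input.
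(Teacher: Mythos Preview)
Your setup is correct and, through the reduction to prime divisors of $n+1$, agrees with the paper: the generating-function identity
\[
s_n(D_{k,n})=[u^k]\,\frac{(1-u)^n+(n-k-1)(1+u)^n+1}{(1-u)(1+u)^{n-k-1}}
\]
is valid, and the evaluation $s_n(D_{1,n})=0$ (hence $s_{1,n}=-(n+1)$) is exactly how the paper begins as well. From this point both you and the paper are reduced to showing, for each prime $p\mid n+1$, that some $s_{k,n}$ (equivalently some $s_n(D_{k,n})$) is not divisible by $p$.

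The gap is that you do not actually carry out this step. Your plan---reducing the generating function modulo $p$ via Frobenius and hunting for a good $k$ by a Lucas-style digit argument---is plausible, but the case analysis you describe (balancing the vanishing of the $(n-k-1)(1+u)^n$ term against the other two, controlling $k\bmod p^{a}$ and $q\bmod p$ simultaneously) is the entire content of the lemma, and you have left it as an obstacle rather than an argument. Without that, the proof is not complete.

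The paper avoids this difficulty by a single algebraic trick that you should adopt. Instead of analysing $s_n(D_{k,n})$ directly, it forms the second-difference combination
\[
L_{k,n}:=-s_{k,n}+3s_{k-1,n}-2s_{k-2,n},
\]
and a short telescoping computation from the closed formula gives, for even $n$,
\[
L_{k,n}=-(2^{k}+1)\Bigl(1+(-1)^{k+1}\binom{n}{k}\Bigr).
\]
This factorisation collapses your generating-function problem to two decoupled congruences: $2^{k}\equiv -1\pmod p$ and $\binom{n}{k}\equiv(-1)^{k}\pmod p$. One then only needs a single $k\in\{2,\dots,n-2\}$ violating both. The paper finds it by taking the least index $j$ with $n_j<p-1$ in the base-$p$ expansion of $n$ (such $j$ exists precisely because $n+1$ is not a $p$-power) and checking $k=p^{j}$ or $k=p^{j}+1$ via Lucas' theorem; the verification is a few lines. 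This is the missing idea in your plan: the raw $s_n(D_{k,n})$ do not factor usefully, but a well-chosen linear combination does.
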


Prior to proving this lemma we use it to derive our main theorem:

\begin{thm}\label{thm:main}
There exists a sequence of smooth projective toric varieties $\{X^n\}_{n=1}^\infty$, representing polynomial generators of $\Omega^{U}_{*}$.
\end{thm}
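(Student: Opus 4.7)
My plan is to split the construction dimension by dimension. For each $n$ that is odd or satisfies $n+1=p^k$ for some prime $p$, I simply import the smooth projective toric generator of $\Omega^U_{2n}$ already built by Wilfong in~\cite{wi-13}. All remaining dimensions are even $n$ with $n+1$ not a prime power, and for each such $n$ Theorem~\ref{mn} requires a smooth projective toric variety $X^n$ of complex dimension $n$ with $s_n(X^n)=\pm 1$. Producing such an $X^n$ is the job of the rest of the argument.

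In the remaining dimensions the idea is to start from a convenient smooth projective toric ``seed'' $X_0$ of complex dimension $n$ (for instance $\C P^n$, for which $s_n=n+1$) and to hit $s_n=1$ by applying a sequence of equivariant modifications $B_{k_1},\dots,B_{k_N}$. By Proposition~\ref{st:bk-toric} each $B_k$ keeps us inside the category of smooth projective toric varieties, and by~\eqref{eq:cobordism_blowup_identity} together with~\eqref{eq:sn_diff} each $B_k$ shifts the Milnor number by the fixed integer $s_{k,n}$, independently of the variety it is applied to. Consequently it suffices to exhibit non-negative integers $m_0,\dots,m_{n-2}\geq 0$ satisfying
\[
s_n(X_0)+\sum_{k=0}^{n-2}m_k\,s_{k,n}=1,
\]
and then to apply $B_k$ to $X_0$ exactly $m_k$ times (in any order) to obtain the desired $X^n$.

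The crucial ingredient is Lemma~\ref{lm:key}, which supplies $\mathrm{gcd}(s_{0,n},\dots,s_{n-2,n})=1$ and thus guarantees that \emph{some} integer combination of the $s_{k,n}$ realises any prescribed shift $1-s_n(X_0)$. The main obstacle I anticipate is upgrading an integer combination to a \emph{non-negative} one, since the modifications $B_k$ can only be performed and never undone. I would handle this once the explicit formulas for the $s_n(D_{k,n})$ from Section~\ref{sec:compute_sn} are available, in one of two ways. If the numbers $\{s_{k,n}\}$ include values of both signs, a short B\'ezout-plus-syzygy argument shows that every integer lies in the non-negative monoid they generate. If instead all $s_{k,n}$ turn out to share a sign, one first enlarges $|s_n(X_0)|$ on the appropriate side of $1$ (either by choosing a larger seed, e.g.\ a suitable toric Bott tower, or by a few preliminary modifications in the increasing direction) until $1-s_n(X_0)$ exceeds the Frobenius number of the numerical semigroup $\langle|s_{k,n}|\rangle$, and then invokes Sylvester's theorem. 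Either route yields the desired $m_k$, and the resulting smooth projective toric variety $X^n$ has $s_n(X^n)=1$; by Theorem~\ref{mn} its class is a polynomial generator of $\Omega^U_{2n}$, completing the dimension and hence the proof.
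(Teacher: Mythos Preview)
Your proposal is correct and follows essentially the same route as the paper: reduce to even $n$ with $n+1$ not a prime power via Wilfong, then use Lemma~\ref{lm:key} together with the additivity of $s_{k,n}$ under the equivariant $B_k$ to reach $s_n=1$ from a toric seed. The paper implements precisely your second route: it proves a Frobenius-type lemma (Lemma~\ref{lm:ntheory}) for the numbers $-s_{0,n},\dots,-s_{n-2,n}$ (using that $s_{0,n}<0$), and then supplies a seed with arbitrarily large $s_n$ via a projectivised bundle over $\C P^1\times\C P^1$ (Lemma~\ref{lm:large_sn}), so your suggestion of a ``suitable toric Bott tower'' in place of $\C P^n$ is exactly what is needed.
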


The proof of this Theorem is based on several lemmata.

\begin{lm}\label{lm:ntheory}
Consider integers $t_{0},\dots,t_{l}$ such that $\mathrm{gcd}(t_0,\dots,t_l)=1$. Suppose that $t_0>0$. Then there exists a natural number $N=N(t_{0},\dots,t_{l})$ such that for any integer $x>N$ there exists a representation $x=\sum_{i=0}^{l}a_{i}t_{i}$, where $a_{i}\ge 0$, $i=0,\dots,l$ are non-negative integers.
\end{lm}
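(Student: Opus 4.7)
The plan is to introduce the numerical set $\Sigma = \{\sum_{i=0}^{l} a_i t_i : a_i \in \Z_{\geq 0}\} \subseteq \Z$ and show that $\Sigma$ contains every sufficiently large integer. The structural idea is to exploit the positivity of $t_0$ by reducing modulo $t_0$: once I exhibit, for each residue class $\rho$ modulo $t_0$, a single element of $\Sigma$ congruent to $\rho$, I can reach every larger integer in that class by adding copies of $t_0$ through the coefficient $a_0$.

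The first step uses the gcd hypothesis to show that the residues $\bar t_1,\ldots,\bar t_l$ generate $\Z/t_0\Z$ additively. Indeed, integer combinations $\sum_{i=1}^{l} c_i t_i$ (with $c_i \in \Z$) range over $d\Z$, where $d=\gcd(t_1,\ldots,t_l)$; the hypothesis $\gcd(t_0,t_1,\ldots,t_l)=1$ forces $\gcd(t_0,d)=1$, so $d$ is a unit in $\Z/t_0\Z$ and every residue $\rho\in\{0,1,\ldots,t_0-1\}$ is attained. I would then replace each $c_i$ by $c_i+k_i t_0$ for $k_i$ sufficiently large; this only adds a multiple of $t_0$ to the total (so preserves the residue mod $t_0$) while making every coefficient non-negative. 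The upshot is that for every $\rho$ I can fix non-negative integers $r_i^{(\rho)}$ such that
$$S_\rho := \sum_{i=1}^{l} r_i^{(\rho)} t_i \equiv \rho \pmod{t_0}.$$
The integer $S_\rho$ itself may be negative if some $t_i<0$, but this is harmless for what follows.

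To finish, I would set $N := \max\bigl(0,\,\max_{\rho} S_\rho\bigr)$. Given any integer $x>N$, let $\rho\in\{0,\ldots,t_0-1\}$ be its residue modulo $t_0$. Then $x-S_\rho$ is a positive integer divisible by $t_0$, so $x-S_\rho = m t_0$ for some $m\geq 1$, which gives the required non-negative representation
$$x \;=\; m\cdot t_0 + \sum_{i=1}^{l} r_i^{(\rho)} t_i \;\in\; \Sigma.$$

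I do not expect a genuine obstacle: this is the standard numerical-semigroup argument in the spirit of the Chicken McNugget / Frobenius theorem. The only nuance beyond the classical situation (all $t_i>0$) is that the $t_i$ for $i\geq 1$ may be negative or zero, but that is absorbed by the mod-$t_0$ reduction together with the freedom to add multiples of $t_0$ to the coefficients.
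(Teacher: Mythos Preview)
Your argument is correct and self-contained, but it differs from the paper's proof. The paper proceeds by reduction to the classical Frobenius coin problem: it first cites the all-nonnegative case as known, then for general $t_i$ applies that case to $|t_0|,\ldots,|t_l|$ to write $x=\sum a_i|t_i|=\sum (a_i\,\sgn(t_i))t_i$, and finally repairs each negative coefficient $a_j\,\sgn(t_j)$ by the substitution $a_0\mapsto a_0-kt_j$, $a_j\,\sgn(t_j)\mapsto a_j\,\sgn(t_j)+kt_0$ for $k$ large (which leaves the sum unchanged and, since $t_0>0$ and $t_j<0$, makes both coefficients nonnegative). Your route instead works directly modulo $t_0$: you exhibit one nonnegative combination in each residue class and then add copies of $t_0$. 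Your approach has the advantage of being entirely self-contained---it does not invoke the Frobenius result as a black box and in fact re-proves the positive case along the way---while the paper's reduction is slightly quicker if one is willing to cite that result. Both arguments ultimately exploit the same mechanism, namely the freedom to trade coefficients against multiples of $t_0$.
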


\begin{proof}
In case all $t_i$ are non-negative this is an elementary fact of Number Theory, related to a classical Frobenius problem, cf., for example,~\cite{br-42}.

The general case (i.e., some $t_i$ is negative) will be reduced to this particular one. First, for the absolute values $\{|t_i|\}_{i=0}^l$ construct a number $N=N(|t_0|,|t_1|,\dots,|t_l|)$ satisfying the conditions of the lemma. We claim that any integer $x>N$ can be represented as a linear combination of the numbers $\{t_i\}_{i=0}^l$ with non-negative integer coefficients. Indeed, let $x>N$. By the definition of $N$ there exists a linear combination
\[
x=\sum_{i=0}^l a_i \cdot |t_i|=\sum_{i=0}^l (a_i \cdot \sgn(t_i)) \cdot t_i
\]
with non-negative integer coefficients $a_i$, where $\sgn(t)$ denotes the sign of a number $t$. Recall that, by hypothesis, $t_0>0$. Hence, $\sgn(t_0)=1$. For any index $j$, such that $t_j$ is negative, replace $a_0$ with $a_0'=a_0-k t_j$, and $a_j\cdot \sgn(t_j)$ with $a_j'=a_j\cdot\sgn(t_j)+kt_0$, where $k\in \N$. Obviously the linear combination does not change with this replacement. For $k$ large enough ($k>a_j/t_0$) the coefficients of both $t_0$ and $t_j$ become positive. After performing this procedure for all negative $t_j$ one obtains the desired representation.
\end{proof}

\begin{lm}\label{lm:large_sn}
For any $N\in \N$ there exists an $n$-dimensional smooth projective toric variety~$X$ with Milnor number $s_n(X)>N$.
\end{lm}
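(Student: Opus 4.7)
The plan is to exhibit an explicit one-parameter family of $n$-dimensional smooth projective toric varieties whose Milnor numbers tend to $+\infty$. A convenient choice is the family of projectivised split vector bundles
\[
X_{a} \;:=\; \P\bigl(\mc O_{\C P^{2}}(a)\oplus \mc O_{\C P^{2}}^{\oplus(n-2)}\bigr),\qquad a\in\N,
\]
over $B=\C P^{2}$; each $X_{a}$ has complex dimension $\dim B + \mathrm{rk}\,\xi_{a}-1 = 2+(n-1)-1 = n$. The summands are equivariant with respect to the standard $(\C^{*})^{2}$-action on $\C P^{2}$, and the splitting supplies an additional fibrewise $(\C^{*})^{n-2}$-action, making $X_{a}$ into a smooth projective toric $n$-fold.

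To compute $s_{n}(X_{a})$ I would invoke the Leray-Hirsch isomorphism~\eqref{eq:leray-hirsch} and the standard stably complex structure~\eqref{eq:stably-complex-standard}. Writing $h=c_{1}(\mc O_{B}(1))$ and $v=c_{1}(\gamma)$, the stable Chern roots of $TX_{a}$ are $ah+v$ together with $(n-2)$ copies of $v$ (from $p^{*}\xi_{a}\otimes\gamma$) and three copies of $h$ (from $p^{*}T\C P^{2}\oplus\C\simeq\mc O(1)^{\oplus 3}$). Since $c_{j}(\xi_{a})=0$ for $j\ge 2$, the Leray-Hirsch relation reduces to $v^{n-1}=-ah\,v^{n-2}$, and combined with $h^{3}=0$ on $\C P^{2}$ a short binomial expansion transforms
\[
s_{n}(X_{a}) \;=\; (ah+v)^{n}+(n-2)v^{n}+3h^{n}
\]
into $\tfrac{(n-2)(n+1)}{2}\,a^{2}\cdot h^{2}v^{n-2}$. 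The integration formula~\eqref{eq:integration} gives $\langle h^{2}v^{n-2},[X_{a}]\rangle = \langle h^{2},[\C P^{2}]\rangle = 1$, whence $s_{n}(X_{a})=\tfrac{(n-2)(n+1)}{2}\,a^{2}$.

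For every $n\ge 3$ the leading coefficient is a positive integer, so $s_{n}(X_{a})$ grows quadratically in $a$ and one can simply choose $a$ large enough to exceed any prescribed $N$. The Chern-class reduction is routine, so no serious obstacle is anticipated. The only conceptual point to keep in mind is that the naive analogue over $\C P^{1}$, namely $\P(\mc O_{\C P^{1}}(a)\oplus\mc O_{\C P^{1}}^{\oplus(n-1)})$, yields $s_{n}\equiv 0$ in every even dimension by the same kind of computation; this is why the base must be at least $\C P^{2}$.
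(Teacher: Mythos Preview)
Your proof is correct and follows the same overall strategy as the paper: exhibit a one-parameter family of projectivised split bundles over a low-dimensional toric base and show that the Milnor number is unbounded in the parameter. The paper uses the base $\C P^{1}\times\C P^{1}$ with $\xi=\pi_{1}^{*}\mc O(-1)\oplus\pi_{2}^{*}\mc O(a)\oplus\C^{n-3}$ and invokes the computations of \cite{sc-14} to get $s_{n}(\P(\xi))=(n+1)a$; you use the base $\C P^{2}$ with $\xi_{a}=\mc O(a)\oplus\mc O^{\oplus(n-2)}$ and carry out the calculation $s_{n}(X_{a})=\tfrac{(n-2)(n+1)}{2}a^{2}$ directly from the Leray--Hirsch relation, so your argument is self-contained rather than relying on an external reference. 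Both constructions cover all $n\ge 3$, which is exactly the range needed for the application in Theorem~\ref{thm:main}. Your closing remark is also correct---indeed the single-twist bundle $\P(\mc O_{\C P^{1}}(a)\oplus\mc O^{\oplus(n-1)})$ has $s_{n}=0$ in \emph{every} dimension $n\ge 2$, not only the even ones---and this explains why neither construction can get away with a one-dimensional base and a single twist.
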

\begin{proof}
Consider a vector bundle $\xi=\pi_1^*\mc O(-1)\oplus \pi_2^*\mc O(a)\oplus \C^{n-3}$ over $\C P^1\times \C P^1$, where $\pi_1,\pi_2\colon \C P^1\times \C P^1\to \C P^1$ are projections on the first and the second factors respectively. It follows from the computations of~\cite[\S 2.3]{sc-14} that the Milnor number of the projectivisation $\P(\xi)$ equals $s_n(\P(\xi))=(n+1)a$. Hence, for $a>N/(n+1)$ one has $s_n(\P(\xi))>N$. It remains to notice that the manifold $\P(\xi)$ is toric because
the total space of the vector bundle $\xi$ admits an action of the algebraic torus $(\C^{*})^{n+1}$ which descends to the action of the quotient torus $(\C^*)^n$ on $\P(\xi)$.
\end{proof}

\begin{proof}[Proof of Theorem~\ref{thm:main}]
As we mentioned at the beginning of this Section, to prove the Theorem it is enough to construct toric manifolds with Milnor number $1$ in all even dimensions $n$ such that $n+1$ is not a prime power. Let us fix such $n$.

According to Lemma~\ref{lm:key}, $\mathrm{gcd}(s_{0,n},\dots,s_{n-2,n})=1$. From the computations below (Formula~\eqref{eq:sn-blowup-pt}) it follows that $s_{0,n}<0$. We apply Lemma~\ref{lm:ntheory} to the numbers $-s_{0,n},\dots,-s_{n-2,n}$ and find $N=N(-s_{0,n},\dots,-s_{n-2,n})$ such that any integer $m>N$ can be represented as a linear combination of numbers $-s_{0,n},\dots,-s_{n-2,n}$ with some non-negative integer coefficients $a_0,\dots,a_{n-2}$.

Now take a toric manifold $X$ of dimension $n$ with Milnor number $s_n(X)>N+1$ and express the number $s_n(X)-1$ as an integer linear combination
\[
s_n(X)-1=-\sum_{i=0}^{n-2} a_i\cdot s_{i,n},
\]
with all $a_i$ non-negative.

Then subsequently apply $a_i$ (equivariant) modifications $B_i$ for every $i=0,\dots,n-2$ starting with the manifold~$X$. As a result, we obtain an $n$-dimensional toric variety $Y$ with the Milnor number
\[
s_n(Y)=s_n(X)+\sum_{i=0}^{n-2} a_i\cdot s_{i,n}=1.
\]
\end{proof}
\begin{rem}
If one starts with a \emph{projective} toric manifold $X$, then the construction in the proof leads to a projective manifold $Y$ as well. If $X$ is the projectivisation of the bundle $\xi=\pi_1^*\mc O(1)\oplus \pi_2^*\mc O(a)\oplus \C^{n-3}$ over $\C P^1\times \C P^1$, as in Lemma~\ref{lm:large_sn}, then the corresponding moment polytope is combinatorially equivalent to the product of simplices $P=\Delta^1\times \Delta^1\times\Delta^{n-2}$. In this case the polytope corresponding to the variety $Y$ is obtained from $P$ by a successive vertex and simplicial face truncations, according to the description of the equivariant modifications in Section~\ref{sec:equivariant_modification}.
\end{rem}

\subsection{Computation of Milnor numbers \texorpdfstring{$s_{n}(D_{k,n})$}{sndk}}\label{sec:compute_sn}

This and the following Subsection are devoted to the proof of Lemma~\ref{lm:key}, which we use in the proof of our main theorem.

First of all, we deduce an explicit formula for the Milnor numbers $s_n(D_{k,n})$ of the stably complex manifolds $D_{k,n}=\P(\mc O(-1)\oplus\mc O(1)^{\oplus(n-k-1)}\oplus\overline{\C})$. Similar computations were done in~\cite[\S 2.3]{sc-14}. Consider the natural projection $p\colon D_{k,n}\to \C P^k$. Denote by $u$ the positive generator of $H^2(\C P^k, \Z)$, i.e., $u=c_1(\mc O(1))$. Let $v$ be $c_1(\gamma)\in H^2(D_{k,n},\Z)$, where $\gamma=\mc O(1)$ is the ample line bundle along the fibers of the projection $p$. According to the isomorphism~\eqref{eq:leray-hirsch}, the cohomology ring $H^*(D_{k,n},\Z)$ is generated by $u$ and $v$:

\begin{equation}\label{cohom}
H^{*}(D_{k,n},\Z)\cong\mathbb{Z}[u,v]/(u^{k+1},v(v+u)^{n-k-1}(v-u)).
\end{equation}

It follows from the definition of the non-standard stably complex structure~\eqref{eq:stably-complex-nonstandard} on the manifold
$D_{k,n}:=\P(\mc O(-1)\oplus\mc O(1)^{\oplus(n-k-1)}\oplus\overline{\C})$ that
\[
c(T D_{k,n})=(1+u+v)^{n-k-1}(1-u+v)(1-v)\prod_{s=1}^k (1+w_s),
\]
where $w_1,\dots,w_k$ are Chern roots of $T\C P^k$. Since $w_s^n=0$ for all $s=1,\dots,k$, one gets
\begin{equation}\label{eq:sn}
s_n(D_{k,n})=\langle(n-k-1)(u+v)^n+(-u+v)^n+(-v)^n, [D_{k,n}]\rangle.
\end{equation}

We will need some auxillary lemmata to complete the computation of $s_n(D_{k,n})$.
\begin{lm}\label{lm:comp1}
For any $n, 0\le k\le n-2$ one has
\begin{equation}\label{eq:comp1}
\langle v^n, [D_{k,n}]\rangle=\sum_{i=0}^k (-1)^i 2^{k-i}\binom{n-1}{i}.
\end{equation}
\end{lm}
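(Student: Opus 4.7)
The plan is to apply the integration formula~\eqref{eq:integration} to the projective bundle $p\colon D_{k,n}\to\C P^k$. Since $\dim_\C D_{k,n}=n$ and $v^n$ is a top-degree class on $D_{k,n}$, setting $\omega=1\in H^0(\C P^k)$ and $l=n$ in~\eqref{eq:integration} gives
\[
\langle v^n,[D_{k,n}]\rangle \;=\; \langle c^{-1}(\xi),[\C P^k]\rangle,
\]
which reads off the coefficient of $u^k$ in the formal power series $c^{-1}(\xi)$, where $\xi=\mc O(-1)\oplus\mc O(1)^{\oplus(n-k-1)}\oplus\C$. The Chern roots of $\xi$ split as $-u,u,\dots,u,0$, so $c(\xi)=(1-u)(1+u)^{n-k-1}$. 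Expanding $(1-u)^{-1}=\sum_{a\geq 0} u^a$ and $(1+u)^{-(n-k-1)}=\sum_{b\geq 0}(-1)^b\binom{n-k-2+b}{b}u^b$ and convolving then yields
\[
\langle v^n,[D_{k,n}]\rangle \;=\; \sum_{b=0}^k(-1)^b\binom{n-k-2+b}{b}.
\]

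What remains is to identify this with the right-hand side of~\eqref{eq:comp1}, which amounts to the combinatorial identity
\[
\sum_{b=0}^k(-1)^b\binom{n-k-2+b}{b} \;=\; \sum_{i=0}^k(-1)^i 2^{k-i}\binom{n-1}{i}.
\]
I would prove this by double induction, showing that both sides $h(n,k)$ satisfy the recurrence $h(n+1,k)-h(n,k)=-h(n,k-1)$. For the right-hand side this is immediate from Pascal's rule $\binom{n}{i}-\binom{n-1}{i}=\binom{n-1}{i-1}$ and a reindexing $l=i-1$. For the left-hand side, applying Pascal's rule in the form $\binom{n-k-1+b}{b}-\binom{n-k-2+b}{b}=\binom{n-k-2+b}{b-1}$ and then reindexing $l=b-1$ produces the same recurrence. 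With the two trivial base cases $k=0$ (both sides equal $1$) and $n=k+2$ (both collapse to $\sum_{b=0}^k(-1)^b=(1+(-1)^k)/2$), this establishes the identity throughout the range $0\leq k\leq n-2$.

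The integration formula and the Segre-class expansion are routine; the main technical ingredient is thus the combinatorial identity above. The particular form $\sum(-1)^i 2^{k-i}\binom{n-1}{i}$ chosen in~\eqref{eq:comp1} is presumably adapted to the subsequent evaluation of $s_n(D_{k,n})$ via~\eqref{eq:sn}, where the analogous contributions coming from $(u+v)^n$, $(-u+v)^n$, and $(-v)^n$ will have to be combined into an expression of similar shape.
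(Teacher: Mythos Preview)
Your argument is correct, but it diverges from the paper's at the point where you expand $c^{-1}(\xi)$. The paper avoids your auxiliary combinatorial identity altogether by a single algebraic trick: writing
\[
(1+u)^{-(n-k-1)}(1-u)^{-1}=(1+u)^{-(n-k)}\Bigl(1-\tfrac{2u}{1+u}\Bigr)^{-1}=\sum_{i\ge 0}2^{i}u^{i}(1+u)^{-(n-k+i)},
\]
so that extracting the coefficient of $u^{k}$ immediately yields $\sum_{i=0}^{k}2^{i}(-1)^{k-i}\binom{n-1}{k-i}$, which is the stated sum after reindexing. Thus the ``particular form'' you single out is not an artifact of downstream bookkeeping but falls out directly from this rewriting of $(1-u)^{-1}$. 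Your route---expanding the two geometric-type series separately, convolving, and then matching the result to~\eqref{eq:comp1} via the recurrence $h(n+1,k)-h(n,k)=-h(n,k-1)$---works fine and the induction is well-founded on the boundary lines $k=0$ and $n=k+2$; note only that on the right-hand side the base case $n=k+2$ does not literally ``collapse to $\sum_b(-1)^b$'' but rather evaluates to $(1+(-1)^k)/2$ via $\sum_{i=0}^{k+1}(-1)^{i}2^{k-i}\binom{k+1}{i}=2^{k}(1-\tfrac12)^{k+1}=\tfrac12$ and subtraction of the $i=k+1$ term. The paper's manipulation is shorter and has the advantage that the analogous computations in Lemmata~\ref{lm:comp2} and~\ref{lm:comp3} use the same device (multiplying by a suitable power of $(1+u)$), giving a uniform treatment.
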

\begin{proof}
According to formula~\eqref{eq:integration},
\[
\langle v^n, [D_{k,n}]\rangle= \langle c^{-1}(\xi), [\C P^k]\rangle.
\]
The total Segre class of the bundle $\xi=\mc O(-1)\oplus\mc O(1)^{\oplus(n-k-1)}\oplus\C$ over $\C P^k$ is equal to $c^{-1}(\xi)=(1+u)^{-(n-k-1)}(1-u)^{-1}$. Hence,
\begin{multline}
\langle c^{-1}(\xi), [\C P^k]\rangle = \langle (1+u)^{-(n-k-1)}(1-u)^{-1}, [\C P^k]\rangle = \langle (1+u)^{-(n-k)}\biggl(1-\frac{2u}{1+u}\biggr)^{-1}, [\C P^k]\rangle=\\
=\langle\sum_{i=0}^\infty 2^i u^i (1+u)^{-(n-k+i)}, [\C P^k]\rangle=\sum_{i=0}^k 2^i (-1)^{k-i}\binom{n-1}{k-i},
\end{multline}
where in the last equality we use the fact that the coefficient of $u^{k-i}$ in a series $(1+u)^{-(n-k+i)}$ is equal to $(-1)^{k-i}\binom{n-1}{k-i}$.
\end{proof}

\begin{lm}\label{lm:comp2}
For any $n, 0\le k\le n-2$ one has
\begin{equation}
\langle (u+v)^n, [D_{k,n}]\rangle=2^{k+1}-1.
\end{equation}
\end{lm}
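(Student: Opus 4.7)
The plan is to mimic the strategy of Lemma~\ref{lm:comp1}: use the Leray--Hirsch integration formula~\eqref{eq:integration} to push the computation down to $\C P^k$, and then extract a single coefficient from an explicit rational generating function. First I would expand
\[
(u+v)^n=\sum_{j=0}^n \binom{n}{j} u^j v^{n-j},
\]
and discard all terms with $j\ge k+1$ (they vanish thanks to $u^{k+1}=0$). For the surviving terms, each $u^j\in H^{2j}(\C P^k,\Z)$, so formula~\eqref{eq:integration} applies with $l=n-j$ and gives
\[
\langle u^j v^{n-j},[D_{k,n}]\rangle=[u^{k-j}]\,c^{-1}(\xi),
\]
where $[u^m]$ denotes the coefficient of $u^m$ and $\xi=\mc O(-1)\oplus\mc O(1)^{\oplus(n-k-1)}\oplus\C$, so that $c^{-1}(\xi)=(1-u)^{-1}(1+u)^{-(n-k-1)}$.

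Next, I would repackage the resulting sum as a single coefficient extraction. Summing over $j=0,\dots,k$ yields
\[
\langle (u+v)^n,[D_{k,n}]\rangle=\sum_{j=0}^k\binom{n}{j}\,[u^{k-j}]\,(1-u)^{-1}(1+u)^{-(n-k-1)}=[u^k]\,(1+u)^n(1-u)^{-1}(1+u)^{-(n-k-1)},
\]
because $\sum_j \binom{n}{j}u^j$ truncated at degree $k$ agrees with $(1+u)^n$ modulo $u^{k+1}$, and we are only asking for the degree-$k$ coefficient of a product. The two powers of $(1+u)$ collapse to $(1+u)^{k+1}$, leaving
\[
\langle (u+v)^n,[D_{k,n}]\rangle=[u^k]\,\frac{(1+u)^{k+1}}{1-u}=\sum_{i=0}^{k}\binom{k+1}{i}=2^{k+1}-1,
\]
where the last equality uses $\sum_{i=0}^{k+1}\binom{k+1}{i}=2^{k+1}$ minus the $i=k+1$ term.

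This is essentially a bookkeeping exercise, and no step appears to hide a real obstacle. The only place where care is needed is the very first reduction: one must be certain that the Leray--Hirsch integration formula~\eqref{eq:integration} is applied with the right dimension counting (the rank of $\xi$ is $n-k+1$, so the fiber $\C P^{n-k}$ contributes $v^{n-k}$ as the ``top'' fiber class, and one is truly extracting the degree-$(k-j)$ part of the Segre series, not some shifted version). Once this bookkeeping is set up, the miraculous cancellation $(1+u)^{n}\cdot(1+u)^{-(n-k-1)}=(1+u)^{k+1}$ drops the answer out immediately and, pleasingly, the dimension $n$ disappears from the final formula, as the statement predicts.
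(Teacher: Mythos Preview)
Your argument is correct and is essentially identical to the paper's own proof: both expand $(u+v)^n$ binomially, push the integration down to $\C P^k$ via~\eqref{eq:integration} against the Segre class $(1-u)^{-1}(1+u)^{-(n-k-1)}$, observe the cancellation $(1+u)^n\cdot(1+u)^{-(n-k-1)}=(1+u)^{k+1}$, and read off $\sum_{i=0}^k\binom{k+1}{i}=2^{k+1}-1$. The only cosmetic difference is that you phrase the final step as a coefficient extraction $[u^k]$, whereas the paper writes it as a pairing $\langle\cdot,[\C P^k]\rangle$.
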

\begin{proof}
We use formula~\eqref{eq:integration} again, and substitute the expression of the Segre class 
\[c^{-1}(\xi)=(1+u)^{-(n-k-1)}(1-u)^{-1}\]
to obtain
\begin{multline*}
\langle (u+v)^n, [D_{k,n}]\rangle=\langle\sum_{i=0}^n \binom{n}{i}u^iv^{n-i}, [D_{k,n}]\rangle=\langle \sum_{i=0}^n \binom{n}{i}u^i c^{-1}(\xi), [\C P^k]\rangle=\\=
\langle (1+u)^n c^{-1}(\xi), [\C P^k]\rangle=\langle(1+u)^{k+1} (1+u+u^2+\cdots), [\C P^k]\rangle=\langle \sum_{i=0}^{k} \binom{k+1}{i}u^k, [\C P^k]\rangle=2^{k+1}-1.
\end{multline*}
\end{proof}

\begin{lm}\label{lm:comp3}
For any $n, 0\le k\le n-2$ one has
\begin{equation}
\langle (-u + v)^n, [D_{k,n}]\rangle=\sum_{i=0}^k (-2)^i\binom{n-1}{i}.
\end{equation}
\end{lm}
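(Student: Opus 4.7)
The plan is to mimic the strategy used in Lemmas~\ref{lm:comp1} and~\ref{lm:comp2}: expand $(-u+v)^n$ via the binomial theorem, apply the integration formula~\eqref{eq:integration} to reduce the computation to $\mathbb{C}P^k$, and then insert the explicit total Segre class $c^{-1}(\xi)=(1+u)^{-(n-k-1)}(1-u)^{-1}$. Since $u^i$ has degree $2i$ and $v^{n-i}$ has "vertical" degree $n-i$, the pairing becomes
\[
\langle(-u+v)^n,[D_{k,n}]\rangle=\sum_{i=0}^n\binom{n}{i}(-1)^i\langle u^i c^{-1}(\xi),[\mathbb{C}P^k]\rangle=\langle(1-u)^n c^{-1}(\xi),[\mathbb{C}P^k]\rangle.
\]
Substituting the Segre class and simplifying one factor of $(1-u)$ yields
\[
\langle(-u+v)^n,[D_{k,n}]\rangle=\bigl[u^k\bigr]\,(1-u)^{n-1}(1+u)^{-(n-k-1)},
\]
where $[u^k]$ denotes "coefficient of $u^k$ in" (equivalently, pairing with $[\mathbb{C}P^k]$).

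The main step, and the one where the structure differs from the earlier lemmas, is extracting this coefficient so that the answer appears precisely in the form $\sum_{i=0}^k(-2)^i\binom{n-1}{i}$. The trick I would use is the identity $1-u=(1+u)-2u$, which converts one factor of $(1+u)$ at a time and absorbs it into the negative power. Expanding,
\[
(1-u)^{n-1}=\sum_{i=0}^{n-1}\binom{n-1}{i}(-2u)^i(1+u)^{n-1-i},
\]
and multiplying by $(1+u)^{-(n-k-1)}$ gives
\[
(1-u)^{n-1}(1+u)^{-(n-k-1)}=\sum_{i=0}^{n-1}\binom{n-1}{i}(-2)^i u^i(1+u)^{k-i}.
\]

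Now the key observation: the coefficient of $u^k$ in $u^i(1+u)^{k-i}$ is the coefficient of $u^{k-i}$ in $(1+u)^{k-i}$. For $i\le k$ this is simply $\binom{k-i}{k-i}=1$; for $i>k$, we would need a negative-power coefficient of the formal series $(1+u)^{k-i}=\sum_{j\ge0}\binom{k-i}{j}u^j$, which vanishes. Therefore only the terms $0\le i\le k$ contribute, and the total coefficient is
\[
\sum_{i=0}^k\binom{n-1}{i}(-2)^i,
\]
which is exactly the stated formula. No serious obstacle is expected; the only delicate point is the vanishing argument for $i>k$, for which one must read $(1+u)^{k-i}$ as a formal power series consistent with how $c^{-1}(\xi)$ was defined.
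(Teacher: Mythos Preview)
Your proof is correct and follows essentially the same approach as the paper's own proof: both reduce to the coefficient of $u^k$ in $(1-u)^{n-1}(1+u)^{-(n-k-1)}$, then use the identity $1-u=(1+u)-2u$ to expand and extract the answer. The only minor difference is that your vanishing argument for $i>k$ invokes the formal power series interpretation, whereas one could equivalently note that $u^i=0$ in $H^*(\mathbb{C}P^k,\mathbb{Z})$ for $i>k$; either justification works.
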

\begin{proof}
Similarly to the previous lemmata:
\begin{multline*}
\langle (-u+v)^n, [D_{k,n}]\rangle=\langle \sum_{i=0}^n (-1)^i\binom{n}{i}u^iv^{n-i}, [D_{k,n}]\rangle=\langle \sum_{i=0}^n (-1)^i\binom{n}{i}u^i c^{-1}(\xi), [\C P^k]\rangle=\\
=\langle (1-u)^{n-1} (1+u)^{-(n-k-1)}, [\C P^k]\rangle=
\langle ((1+u)-2u)^{n-1} (1+u)^{-(n-k-1)}, [\C P^k]\rangle=\\=\langle \sum_{i=0}^{(n-1)}(-2^i)\binom{n-1}{i} u^i (1+u)^{n-1-i}(1+u)^{-(n-k-1)}, [\C P^k]\rangle=\sum_{i=0}^k (-2)^{i}\binom{n-1}{i}.
\end{multline*}
\end{proof}

Putting together the expressions given by Lemmata~\ref{lm:comp1}, \ref{lm:comp2}, \ref{lm:comp3} and using the formula~\eqref{eq:sn} we obtain:

\begin{pr}\label{main}
The Milnor number of manifold $D_{k,n}$ satisfies
\begin{equation}\label{comp8}
s_{n}(D_{k,n})=(n-k-1)(2^{k+1}-1)+\sum_{i=0}^{k}(-1)^i\biggl(2^{i}+(-1)^{n}2^{k-i}\biggr)\binom{n-1}{i}.
\end{equation}
\end{pr}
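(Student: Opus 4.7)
The proposition is essentially a bookkeeping consequence of the three lemmas just proven, together with formula \eqref{eq:sn}. My plan is therefore to substitute the expressions from Lemmata \ref{lm:comp1}, \ref{lm:comp2}, \ref{lm:comp3} into \eqref{eq:sn} and collect terms, being careful with the signs.

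Concretely, I would start by rewriting
\[
s_n(D_{k,n}) = (n-k-1)\langle(u+v)^n,[D_{k,n}]\rangle + \langle(-u+v)^n,[D_{k,n}]\rangle + (-1)^n\langle v^n,[D_{k,n}]\rangle,
\]
where the last term uses $\langle(-v)^n,[D_{k,n}]\rangle=(-1)^n\langle v^n,[D_{k,n}]\rangle$. Plugging in Lemma \ref{lm:comp2} gives the first summand $(n-k-1)(2^{k+1}-1)$ immediately. For the other two terms, I would use $(-2)^i=(-1)^i 2^i$ to rewrite Lemma \ref{lm:comp3} as $\sum_{i=0}^k(-1)^i 2^i\binom{n-1}{i}$, and pull the sign $(-1)^n$ inside the sum in Lemma \ref{lm:comp1} to obtain $\sum_{i=0}^k(-1)^{n+i}2^{k-i}\binom{n-1}{i}$. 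Combining these two sums term by term yields
\[
\sum_{i=0}^k(-1)^i\bigl(2^i+(-1)^n 2^{k-i}\bigr)\binom{n-1}{i},
\]
which is exactly the second summand in the claimed formula.

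There is no real obstacle here; the work was already done in the three lemmas. The only thing to keep an eye on is the manipulation of signs: in particular that $(-v)^n$ carries the factor $(-1)^n$ (so the formula depends on the parity of $n$, as reflected in \eqref{comp8}) and that reindexing Lemma \ref{lm:comp1} is not needed because the pairing $(2^i,2^{k-i})$ inside the final bracket comes naturally from leaving Lemma \ref{lm:comp3} with exponent $2^i$ and Lemma \ref{lm:comp1} with exponent $2^{k-i}$. Once these signs are tracked the identity follows by direct substitution.
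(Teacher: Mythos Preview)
Your proposal is correct and is exactly the approach the paper takes: the proposition is stated immediately after the three lemmata with the sentence ``Putting together the expressions given by Lemmata~\ref{lm:comp1}, \ref{lm:comp2}, \ref{lm:comp3} and using the formula~\eqref{eq:sn} we obtain,'' and your write-up simply makes this substitution explicit, tracking the sign $(-1)^n$ from $(-v)^n$ and combining the two sums termwise.
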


\begin{ex}\label{ex:sn-dnk}
In the particular case $k=0$ we obtain a formula for the change of the Milnor number under a blow-up at a point, see formula~\eqref{eq:cobordism_blowup_point}:
\begin{equation}\label{eq:sn-blowup-pt}
s_{n}(Bl_{x}X)-s_n(X)=-s_n(D_{0,n})=-(n+(-1)^n),
\end{equation}
For $k=1$ and $k=n-2$ we have:
\begin{align}\label{eq:sn-dnk-examples}
\begin{split}
s_n(D_{1,n})=&\begin{cases}
0,\ &\mbox{for $n$ even},\\
2(n-3),\ &\mbox{for $n$ odd};
\end{cases}\\
s_n(D_{{n-2},n})=&\begin{cases}
2^n-1,\ &\mbox{for $n$ even},\\
0,\ &\mbox{for $n$ odd.}
\end{cases}
\end{split}
\end{align}
\end{ex}

\begin{cor}
For any compact complex manifold $X^n$, the change of the Milnor number under a sequential blow-up $B_k(X)=Bl_{Z^k}(Bl_x X)$ satisfies
\begin{multline}\label{eq:skn}
s_{k,n}=s_n(B_k(X))-s_n(X)=\\=
\bigl(s_n(B_k(X))-s_n(Bl_x X)\bigr)+\bigl(s_n(Bl_x X)-s_n(X)\bigr)=-s_{n}(D_{k,n})-(n+(-1)^{n}) = \\ =-\biggl((n-k-1)(2^{k+1}-1)+\sum_{i=0}^{k}(-1)^i\biggl(2^{i}+(-1)^n 2^{k-i}\biggr)\binom{n-1}{i}+n+(-1)^{n}\biggr),
\end{multline}
where $Z^k\simeq \C P^k\subset E$ is a projective subspace in the exceptional divisor of the blow-up $Bl_x X\to X$.
\end{cor}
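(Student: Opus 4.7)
The plan is to split the difference $s_{k,n}=s_n(B_k(X))-s_n(X)$ through the intermediate manifold $Bl_x X$ that appears in the definition of $B_k(X)=Bl_{Z^k}(Bl_x X)$. Concretely, I would write
\[
s_{k,n} = \bigl(s_n(B_k(X))-s_n(Bl_x X)\bigr)+\bigl(s_n(Bl_x X)-s_n(X)\bigr),
\]
and then evaluate each of the two summands separately using results already established in the excerpt.

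For the first summand, I would invoke equation \eqref{eq:sn_diff}, which was derived from Proposition \ref{prop:hit} applied to the blow-up of $Bl_x X$ along $Z^k$ (whose normal bundle is $\mc O(-1)\oplus \mc O(1)^{\oplus(n-k-1)}$). This gives $s_n(B_k(X))-s_n(Bl_x X)=-s_n(D_{k,n})$, with $D_{k,n}=\P(\mc O(-1)\oplus\mc O(1)^{\oplus(n-k-1)}\oplus\overline{\C})$. For the second summand, I would apply the $k=0$ specialisation recorded in Example \ref{ex:sn-dnk}, namely formula \eqref{eq:sn-blowup-pt}, giving $s_n(Bl_x X)-s_n(X)=-(n+(-1)^n)$.

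Combining these two inputs produces $s_{k,n}=-s_n(D_{k,n})-(n+(-1)^n)$, and finally substituting the closed-form expression for $s_n(D_{k,n})$ from Proposition \ref{main} yields the stated formula. There is essentially no conceptual obstacle: the only non-trivial ingredients are Proposition \ref{prop:hit} and the computation of $s_n(D_{k,n})$ already carried out in Subsection \ref{sec:compute_sn}, and well-definedness of $s_{k,n}$ (independence of the choices of $x$ and $Z^k$) is guaranteed by Proposition \ref{prop:hit} because the cobordism class of $D_{k,n}$ depends only on the isomorphism class of the normal bundle $\mc O(-1)\oplus \mc O(1)^{\oplus(n-k-1)}$ over $Z^k\simeq \C P^k$, which in turn depends only on $n$ and $k$.
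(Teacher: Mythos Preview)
Your proposal is correct and matches the paper's approach exactly: the corollary in the paper has no separate proof, as the chain of equalities displayed in the statement is itself the argument, and it proceeds precisely by splitting through $Bl_x X$, invoking \eqref{eq:sn_diff} and \eqref{eq:sn-blowup-pt}, and then substituting the formula from Proposition~\ref{main}.
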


\subsection{Changes of Milnor number \texorpdfstring{$s_{k,n}$}{snk} are coprime}
In the previous Subsection we deduced the formula~\eqref{comp8} for Milnor numbers $s_n(D_{k,n})$. Using this formula we computed the change of the Milnor numbers under the modifications $B_k(X)\to X$, see~\eqref{eq:skn}.

In this Subsection we prove Lemma~\ref{lm:key}, that is, we show that the numbers $s_{0,n},s_{1,n},\dots,s_{n-2,n}$ are coprime, provided $n$ is even and $n+1$ is not a prime power. First note that $s_{1,n}=-(n+1)$ (see Example~\ref{ex:sn-dnk}). To complete the proof we show that for any prime divisor $p$ of $n+1$ a certain integer linear combination of $s_{k,n}$ is not divisible by $p$.

Let us introduce a family of linear combinations of the numbers $s_{k,n}$ given by a more compact formula than~\eqref{eq:skn}.
\begin{pr}\label{mainnumf}
For any $n\geq 2$ and $k=2,\dots,n-2$ let $L_{k,n}=-s_{k,n}+3s_{k-1,n}-2s_{k-2,n}$. Then
\[
L_{k,n}=-2^{k}-1+(-1)^{n+k}\binom{n}{k}+(-2)^{k}\binom{n}{k}.
\]
In particular, for even $n$:
\[
L_{k,n}=-(2^{k}+1)\Bigl(1+(-1)^{k+1}\binom{n}{k}\Bigr).
\]
\end{pr}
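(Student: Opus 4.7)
The plan is to plug the explicit formula~\eqref{eq:skn} into the definition of $L_{k,n}$ and compute. Decompose
\[
-s_{k,n}=\underbrace{(n-k-1)(2^{k+1}-1)}_{P_k}+\underbrace{\sum_{i=0}^{k}(-1)^i\bigl(2^{i}+(-1)^n 2^{k-i}\bigr)\binom{n-1}{i}}_{T_k}+\underbrace{n+(-1)^{n}}_{c},
\]
so that $L_{k,n}=(P_k-3P_{k-1}+2P_{k-2})+(T_k-3T_{k-1}+2T_{k-2})+(1-3+2)c$. The constant term vanishes because the operator $f_k\mapsto f_k-3f_{k-1}+2f_{k-2}$ kills constants in $k$, so $c$ drops out immediately.

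Next I handle the polynomial piece $P_k$: expanding $P_k-3P_{k-1}+2P_{k-2}$, the $2^{k-1}$-coefficients collapse to $4(n-k-1)-6(n-k)+2(n-k+1)=-2$, and the $(-1)$-coefficients collapse to $-(n-k-1)+3(n-k)-2(n-k+1)=-1$; hence this piece contributes $-2\cdot 2^{k-1}-1=-2^{k}-1$, matching the first two terms of the claimed formula.

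The combinatorial piece is the heart of the computation. Split $T_k=A_k+(-1)^nB_k$ with
\[
A_k=\sum_{i=0}^{k}(-2)^i\binom{n-1}{i},\qquad B_k=\sum_{i=0}^{k}(-1)^i 2^{k-i}\binom{n-1}{i}.
\]
For $A_k$ one has the trivial telescoping $A_k-A_{k-1}=(-2)^k\binom{n-1}{k}$, so
\[
A_k-3A_{k-1}+2A_{k-2}=(A_k-A_{k-1})-2(A_{k-1}-A_{k-2})=(-2)^k\binom{n-1}{k}+(-2)^k\binom{n-1}{k-1}=(-2)^k\binom{n}{k}
\]
by Pascal's identity. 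For $B_k$ the recursion is $B_k=2B_{k-1}+(-1)^k\binom{n-1}{k}$; substituting twice and cancelling the $B_{k-2}$'s gives $B_k-3B_{k-1}+2B_{k-2}=(-1)^k\binom{n-1}{k}+(-1)^k\binom{n-1}{k-1}=(-1)^k\binom{n}{k}$, again by Pascal. Summing,
\[
T_k-3T_{k-1}+2T_{k-2}=(-2)^k\binom{n}{k}+(-1)^{n+k}\binom{n}{k}.
\]

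Combining the three pieces yields $L_{k,n}=-2^k-1+(-2)^k\binom{n}{k}+(-1)^{n+k}\binom{n}{k}$, which is the first identity. For the specialisation to even $n$, substitute $(-1)^{n+k}=(-1)^k$ and $(-2)^k=(-1)^k 2^k$, then factor out $(2^k+1)$ to obtain $L_{k,n}=-(2^k+1)(1+(-1)^{k+1}\binom{n}{k})$. The only real obstacle is the bookkeeping of signs and binomial identities in the telescoping step for $T_k$; everything else is a direct algebraic simplification that one can verify against the base values computed in Example~\ref{ex:sn-dnk}.
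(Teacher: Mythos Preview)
Your proof is correct and follows essentially the same route as the paper: both substitute formula~\eqref{eq:skn} and simplify the second-order difference $-s_{k,n}+3s_{k-1,n}-2s_{k-2,n}$ by factoring it as a composition of first-order differences, with Pascal's identity producing the $\binom{n}{k}$ at the end. The only organisational distinction is that the paper first records the intermediate quantity $-s_{k,n}+2s_{k-1,n}$ in closed form and then takes one further difference, whereas you split $-s_{k,n}$ into pieces $P_k$, $A_k$, $B_k$, $c$ and apply the full operator to each; this is a cosmetic difference, not a substantive one.
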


\begin{proof}
A direct computation for $k=1,\dots,n-2$ involving the formula \eqref{eq:skn} yields:
$$
\begin{gathered}
-s_{k,n}+2s_{k-1,n}=-2^{k+1}-k+\sum_{i=0}^{k}(-2)^{i}\binom{n}{i}+(-1)^{n+k}\binom{n-1}{k}+1-(-1)^n.
\end{gathered}
$$
Applying this identity twice:
$$
\begin{gathered}
-s_{k,n}+3s_{k-1,n}-2s_{k-2,n}= (-s_{k,n}+2s_{k-1,n})-(-s_{k-1,n}+2s_{k-2,n})
\end{gathered}
$$
we get the desired formula.
\end{proof}

We will also need Lucas' Theorem, see~{\cite{fi-47}} for the proof.
\begin{thm}[Lucas]\label{lucas}
Let $p$ be prime, and let
$$
\begin{gathered}
n=n_{0}+n_{1}p+\dots+n_{r-1}p^{r-1}+n_{r}p^{r}\\
m=m_{0}+m_{1}p+\dots+m_{r-1}p^{r-1}+m_{r}p^{r}
\end{gathered}
$$
be the base $p$ expansions of the positive integers $n$ and $m$. Then one has
$$
\binom{n}{m}\equiv\binom{n_{0}}{m_{0}}\binom{n_{1}}{m_{1}}\dots\binom{n_{r}}{m_{r}}\pmod{p}.
$$
\end{thm}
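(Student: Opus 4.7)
The plan is to prove Lucas' theorem by a generating-function argument in the polynomial ring $\mathbb{F}_p[x]$, comparing the coefficients of $x^m$ in two different expansions of $(1+x)^n$ modulo~$p$.

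First, I would establish the ``freshman's dream'' identity $(1+x)^p \equiv 1+x^p \pmod{p}$. This follows from the binomial theorem together with the elementary fact that $p \mid \binom{p}{k}$ for $0<k<p$ (since $\binom{p}{k} = \frac{p!}{k!(p-k)!}$ has $p$ in the numerator and no factor of $p$ in the denominator). Iterating this identity, I would then show by induction on $j$ that $(1+x)^{p^j} \equiv 1 + x^{p^j} \pmod{p}$ for all $j\ge 0$, since raising both sides to the $p$-th power and applying the base case preserves the congruence.

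Next, using the base-$p$ expansion $n = n_0 + n_1 p + \dots + n_r p^r$, I would factor
\[
(1+x)^n = \prod_{j=0}^{r} (1+x)^{n_j p^j} \equiv \prod_{j=0}^{r} \bigl(1 + x^{p^j}\bigr)^{n_j} \pmod{p},
\]
and then expand each factor via the ordinary binomial theorem:
\[
\prod_{j=0}^{r} \bigl(1 + x^{p^j}\bigr)^{n_j} = \prod_{j=0}^{r} \sum_{k_j=0}^{n_j} \binom{n_j}{k_j} x^{k_j p^j}.
\]
The coefficient of $x^m$ on the left-hand side of the congruence is $\binom{n}{m}$.

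To identify the coefficient of $x^m$ on the right-hand side, I would write $m = m_0 + m_1 p + \dots + m_r p^r$ in base $p$ and invoke uniqueness of the base-$p$ representation: since each $k_j$ runs over $\{0,1,\dots,n_j\} \subset \{0,1,\dots,p-1\}$, the only tuple $(k_0,\dots,k_r)$ producing the monomial $x^m = \prod_j x^{m_j p^j}$ is $(m_0,\dots,m_r)$ itself. (If some $m_j \ge p$ the formula is vacuously consistent with the convention $\binom{n_j}{m_j}=0$ for $m_j > n_j$, but one still needs to check that no other tuple gives $x^m$; the usual convention is to set $\binom{n_j}{m_j}=0$ when $m_j > n_j$, and the argument still goes through because any ``carry'' would produce a power $\ge p^{j+1}$ in the wrong digit.) Equating coefficients modulo $p$ yields $\binom{n}{m} \equiv \prod_{j=0}^{r} \binom{n_j}{m_j} \pmod{p}$, as required. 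The only delicate point is the uniqueness argument in the last step, but this is immediate from the constraint $k_j < p$ combined with uniqueness of base-$p$ digits, so no real obstacle arises.
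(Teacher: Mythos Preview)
Your argument is the standard generating-function proof of Lucas' theorem and is correct. The paper itself does not prove this statement at all; it simply quotes the theorem and refers the reader to \cite{fi-47} for a proof, so there is nothing to compare against beyond noting that your route is the classical one. One small remark: your parenthetical about the case ``$m_j \ge p$'' is unnecessary and slightly confused, since by hypothesis the $m_j$ are base-$p$ digits and hence lie in $\{0,\dots,p-1\}$; the only edge case to mention is $m_j > n_j$, which you already handle correctly by observing that then no valid $k_j$ exists and $\binom{n_j}{m_j}=0$.
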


Now we can carry out the proof of Lemma~\ref{lm:key}.

\begin{rem}
The assumptions on $n$ in Lemma~\ref{lm:key} are important. For any $n=p^m-1$ and prime $p$ all $s_{k,n}$ are divisible by $p$ and clearly not coprime. This follows from the properties of Milnor numbers: the additivity, vanishing on decomposables in $\Omega^{U}_{*}$ (cf.\,\cite{st-68}) and Theorem \ref{mn}. Therefore, the numbers $L_{k,n}$ are also divisible by $p$. To demonstrate these divisibility properties we list the values of $L_{k,n}$ for some $n$ and $k=2,\dots,n-2$: 
\begin{itemize}
\item[] $n=4=5-1:\ 25$;
\item[] $n=6=7-1:\ 70, -189, 238$;
\item[] $n=8=3^2-1:\ 135, -513, 1173, -1881, 1755$.
\end{itemize}
\end{rem}

\begin{proof}[Proof of Lemma~\ref{lm:key}]
We recall that $s_{1,n}=-(n+1).$ Therefore it suffice to show that for any prime divisor $p$ of $n+1$ some linear combination of $s_{0,n},\dots,s_{n-2,n}$ is not divisible by $p$. Using Proposition \ref{mainnumf} we will look for such a combination among the numbers $L_{k,n}=-(1+2^{k})(1+(-1)^{k+1}\binom{n}{k}),\ k=2,\dots,n-2$. Now we will find a number $k$ such that neither of the congruences
\begin{equation}\label{1eq}
2^{k} \equiv  -1\pmod{p}
\end{equation}
\begin{equation}\label{2eq}
\binom{n}{k} \equiv (-1)^{k}\pmod{p}
\end{equation}
is satisfied.
Consider the base $p$ expansion $n=n_{0}+n_{1}p+\dots+n_{r}p^{r}$ of $n$, where $0\le n_i\le (p-1)$, $n_r\neq 0$.
Notice, that $n_0=p-1$ (because the number $n+1$ is divisible by $p$). Moreover, there exists an index $j\le r$ such that $n_{j}<p-1$, because the number $n+1$ is not a prime power. Let $j$ be such a minimal index. Consider the number $k=p^j$. According to Lucas' Theorem,
$$
\binom{n}{k}\equiv\binom{n_j}{1}\equiv n_j\pmod{p}
$$
The congruence~\eqref{2eq} for this $k$ is not satisfied, since $k$ is odd and $0\le n_j<p-1.$ Consider two cases:

\emph{First Case.} Congruence~\eqref{1eq} is not satisfied as well. Then $k=p^j$ is the desired number.

\emph{Second Case.} Congruence~\eqref{1eq} is satisfied: $2^k\equiv-1\pmod{p}$. Let us replace $k$ with $k'=k+1$. By Lucas' Theorem the binomial coefficient $\binom{n}{k'}$ is congruent to $\binom{n_j}{1}\binom{p-1}{1}\equiv -n_{j}\pmod{p}$ and is not equal to $(-1)^{k'}=1,$ because $n_{j}<p-1$. Also we have $2^{k'}\equiv -2\not\equiv -1\pmod{p}$.

In both cases for at least one of the numbers $k\in \{p^j, p^j+1\}$ the combination $L_{k,n}=-s_{k,n}+3s_{k-1,n}-2s_{k-2,n}$ is not divisible by $p$. We have $j\ge 1$, hence $k\ge p>2$. It remains to make sure that $k\le n-2$. In the base $p$ expansion of $n$ one has $n_0=p-1$ and $n_r\ge 1$.
Therefore $n\ge p^r+(p-1)\ge p^j+(p-1)\ge k + (p-2)$. If $p>3$ or $r>j$, then $k\le n-2$, as required. Otherwise if $j=r$ and $p=3$, then $n= 3^r+\sum_{i=0}^{r-1} 2\cdot 3^i$ since $j$ is the minimal number such that $n_j\neq p-1=2$. If $r>1$, then $k=3^r+1<n-2$, as required. The only case left to check is $p=3$, $j=r=1$. Then $n=5$, while in Lemma~\ref{lm:key} we assume $n$ is even.
\end{proof}

\section{Properties of modifications \texorpdfstring{$B_k$}{Bk} and their applications}
\subsection{Modifications \texorpdfstring{$B_k$}{Bk} and polytope operations}
In this Subsection we study a connection between equivariant modifications $B_k$ of toric manifolds and the corresponding operations on Delzant polytopes.

Recall that for any smooth projective toric variety $X$, with the underlying $n$-dimensional polytope $P$, the toric variety $B_k(X)$ corresponds to the polytope ${\rm cut}_{S^k}({\rm cut}_p P )$ obtained from $P$ by successive vertex $p$ and $k$-dimensional face $S^k$ truncations. The choices of a vertex and a face correspond to the choices of a fixed point $x\in X$ and a submanifold $Z\subset E\subset Bl_x(X)$ to blow-up. It turns out that for a specific choice of submanifolds $Z_1=\C P^{k}$ and $ Z_2=\C P^{n-k-2}$ the modifications $B_{k}(X^{n})$ and $B_{n-k-2}(X^{n})$ of a toric variety $X^{n}$ lead to a pair of toric varieties with combinatorially equivalent moment polytopes.

\begin{pr}\label{equiv}
Consider a truncation of a simple polytope $P$ at a vertex $p$. Denote the new facet of the truncated polytope ${\rm cut}_{p}P$ by $G\subset {\rm cut}_{p}P$ ($G$ is the simplex $\Delta^{n-1}$). For any two complementary (i.e.,\,non-intersecting) faces $S_{1}=\Delta^{k}$ and $S_{2}=\Delta^{n-k-2}$ of the simplicial facet $G$, the polytopes $P'_{1}={\rm cut}_{S_{1}}({\rm cut}_{p}P)$ and $P'_{2}={\rm cut}_{S_{2}}({\rm cut}_{p}P)$ are combinatorially equivalent.
\end{pr}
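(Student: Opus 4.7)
My plan is to prove the combinatorial equivalence by exhibiting a direct bijection between the face lattices of $P'_{1}$ and $P'_{2}$ that swaps the two new facets $G$ (from the $p$-cut) and $H$ (from the $S_{i}$-cut).

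I begin by setting up local coordinates near $p$. Because $P$ is Delzant, a neighbourhood of $p$ is combinatorially the positive orthant $\R^{n}_{\geq 0}$, bounded by facets $F_{j}=\{x_{j}=0\}$ for $j=1,\dots,n$. After the $p$-truncation, the new simplex $G=\{\sum_{j}x_{j}=\epsilon\}$ has vertices $v_{1},\dots,v_{n}$ indexed by $\{1,\dots,n\}$, and the complementary faces $S_{1},S_{2}\subset G$ correspond to disjoint sets $I_{1},I_{2}$ partitioning $\{1,\dots,n\}$ with $|I_{1}|=k+1$ and $|I_{2}|=n-k-1$. The cut at $S_{i}$ introduces a new facet $H_{i}$ supported by the hyperplane $\sum_{j}x_{j}+\sum_{j\in I_{3-i}}x_{j}=\epsilon+\delta$ (with $\delta>0$ small); this is the correct defining functional arising from the stellar subdivision ray in the normal fan, not the naive $\sum_{j\in I_{i}}x_{j}=\mathrm{const}$, which would cut off far more than a neighbourhood of $S_{i}$.

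Since $P'_{1}$ is simple, a face near $p$ is uniquely specified by a triple $(J,\alpha,\beta)$, where $J\subseteq\{1,\dots,n\}$ lists the tight $F_{j}$ and $\alpha,\beta\in\{0,1\}$ indicate whether $G$ and $H_{1}$ are tight (dimension is $n-|J|-\alpha-\beta$). A short geometric computation --- using that on $\bigcap_{j\in J}F_{j}\cap G$ the defining functional of $H_{1}$ simplifies to $\epsilon+\sum_{j\in I_{2}\setminus J}x_{j}$ --- yields the four validity conditions: $(J,1,0)$ is non-empty iff $I_{2}\not\subset J$; $(J,0,1)$ iff $I_{1}\not\subset J$; $(J,1,1)$ iff $I_{1}\not\subset J$ and $I_{2}\not\subset J$; and $(J,0,0)$ iff $J\subsetneq\{1,\dots,n\}$. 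For $P'_{2}$ the analogous list holds with $I_{1}$ and $I_{2}$ swapped in the $(J,1,0)$ and $(J,0,1)$ cases. The involution $\Phi(J,\alpha,\beta):=(J,\beta,\alpha)$ then identifies each $P'_{1}$-validity with the corresponding $P'_{2}$-validity, preserves the codimension $|J|+\alpha+\beta$, and respects the inclusion order on triples. On faces of $P$ away from $p$ --- unaffected by either modification --- $\Phi$ is the identity, so the local bijection extends to a global combinatorial isomorphism of face lattices.

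The main obstacle is the careful derivation of the four validity conditions; once these are in hand, the swap $(\alpha,\beta)\mapsto(\beta,\alpha)$ makes the bijection transparent. Geometrically, $\Phi$ exchanges $G\leftrightarrow H$ between the two polytopes, which clarifies why the equivalence is only combinatorial and not linear: the supporting hyperplanes of $H_{1}$ and $H_{2}$ correspond to distinct primitive lattice vectors in $\Z^{n}$, consistent with the fact that the toric varieties $B_{k}(X)$ and $B_{n-k-2}(X)$ generally have different cobordism classes.
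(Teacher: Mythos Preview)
Your proof is correct and follows essentially the same strategy as the paper's: both construct the facet bijection that swaps $G$ with the new facet $H$ (the paper's $G_i$) while fixing all other facets, and then verify that this bijection respects the face lattice. The only difference is presentational --- the paper checks the bijection directly on the finite list of vertices adjacent to $G$ or $G_i$, whereas you parametrize all nearby faces by triples $(J,\alpha,\beta)$ in an explicit affine chart and derive the validity conditions; the underlying combinatorial isomorphism is identical.
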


\begin{proof}
Consider the sets of facets of the polytopes $P_{1}$ and $P_{2}$
\[
\mc F_1=\{F\,|\,F\in P_1\},\quad \mc F_2=\{F\,|\,F\in P_2\}.
\]
We will prove that there exists a bijection $f\colon \mc F_1\to \mc F_2$ such that the facets $F_{i_1},\dots,F_{i_k}$ of the polytope $P_1$ intersect iff the facets $f(F_{i_1}),\dots,f(F_{i_k})$ of the polytope $P_2$ intersect. Due to the simplicity of the polytopes $P_{1}$ and $P_{2},$ it suffices to check this property only for the maximal ($n$-fold) intersections $F_{i_1}\cap\dots\cap F_{i_n}$.

Consider the set $\mc F=\{F|F\in P\}$ of facets of the polytope $P$. The polytope $P_{i}$ has a new facet $G_{i}$, obtained by truncating the face $S_{i}$ ($i=1,2$). Then one has $\mc F_1=\mc F\cup\{G_1, G\}$ and $\mc F_2=\mc F\cup\{G_2, G\}.$

We define the map $f\colon \mc F_1\to \mc F_2$ as follows:
\[
\begin{cases}
f(G)=G_2,\\
f(G_1)=G,\\
f(F)=F,\mbox{ if } F\neq G,G_1.
\end{cases}
\]

We claim that the bijection $f$ induces a combinatorial isomorphism of the polytopes $P_1$ and $P_2$.

 Denote the facets intersecting in the vertex $p\in P$ by $H_1,\dots, H_n$. The simplices $S_1$ and $S_2$ are complementary by hypothesis, hence, (possibly after a relabelling of the facets $H_i$) $S_1=G\cap H_1\cap\dots H_{n-k-1}$, and $S_2=G\cap H_{n-k}\cap\dots\cap H_n$ in the polytope ${\rm cut}_p P$. The polytopes $P_{1}$ and $P_{2}$ are the same outside the neighbourhoods of facets $G, G_1$ and $G, G_2,$ respectively. Hence $f$ induces a combinatorial isomorphism $\mc F_1\backslash \{G,G_1\}\to \mc F_2\backslash \{G,G_2\}$. It remains to consider $n$-fold intersections $F_{i_1}\cap\dots\cap F_{i_n}$, which include one of the facets $G,G_1$ in $P_1$ ($G, G_2$ in $P_2$, respectively).

One can write down all the non-empty intersections $F_{i_1}\cap\dots\cap F_{i_n}$, which include at least one of the facets $G$ and $G_1$ of $P_1$:
\[
\begin{gathered}
G\cap H_{1}\cap\dots\cap \widehat{H_j}\cap\dots\cap H_{n},\ \mbox{where } 1\le j< n-k,\\
G_1\cap H_{1}\cap\dots\cap \widehat{H_j}\cap\dots\cap H_{n},\ \mbox{where } n-k\le j\le n,\\
G\cap G_1\cap H_{1}\cap\dots\cap \widehat{H_{j_1}}\cap\dots\cap \widehat{H_{j_2}}\cap\dots\cap H_{n},\ \mbox{where } 1\le j_1<n-k\le j_2\le n.\\
\end{gathered}
\]
Similarly, all the non-empty intersections $F_{i_1}\cap\dots\cap F_{i_n}$ involving at least one of the facets $G$ and $G_2$ of $P_2$ are:
\[
\begin{gathered}
G_2\cap H_{1}\cap\dots\cap \widehat{H_j}\cap\dots\cap H_{n},\ \mbox{where } 1\le j< n-k,\\
G\cap H_{1}\cap\dots\cap \widehat{H_j}\cap\dots\cap H_{n},\ \mbox{where } n-k\le j\le n,\\
G\cap G_2\cap H_{1}\cap\dots\cap \widehat{H_{j_1}}\cap\dots\cap \widehat{H_{j_2}}\cap\dots\cap H_{n},\ \mbox{where } 1\le j_1 < n-k\le j_2\le n.\\
\end{gathered}
\]

Clearly, bijections $f$ and $f^{-1}$ map $n$-fold intersections in one list to the $n$-fold intersections in the other. This observation and the fact that the bijection $f$ acts identically on all of the facets, except $G$ and $G_{1}$, prove our claim. Hence $f$ induces a combinatorial isomorphism between $P_1$ and $P_2$.
\end{proof}

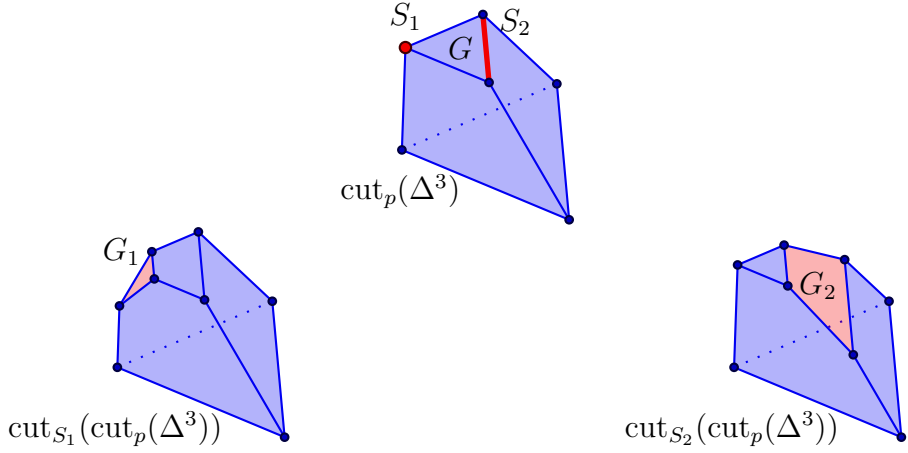
\begin{figure}
\centering
\caption{Polytopes, corresponding to the modifications $Bl_x \C P^3$, $B_0(\C P^3)$ and $B_1(\C P^3)$.\label{fig:cp3}}

\begin{tikzpicture}%
	[x={(0.733255cm, -0.308755cm)},
	y={(0.679249cm, 0.292077cm)},
	z={(0.030936cm, 0.905186cm)},
	scale=3.000000,
	back/.style={loosely dotted, thick},
	edge/.style={color=blue!95!black, thick},
	edgered/.style={color=red!95!black, line width=2pt},
	facet/.style={fill=blue!95!black,fill opacity=0.300000},
	facetlast/.style={fill=red!95!black,fill opacity=0.300000},
	vertex/.style={inner sep=1pt,circle,draw=blue!25!black,fill=blue!75!black,thick,anchor=base},
	vertexred/.style={inner sep=1.5pt,circle,draw=red!25!black,fill=red!95!black,thick,anchor=base}]

%
%
\coordinate (0.00000, 0.00000, 0.50000) at (0.00000, 0.00000, 0.50000);
\coordinate (0.00000, 0.00000, 0.00000) at (0.00000, 0.00000, 0.00000);
\coordinate (1.00000, 0.00000, 0.00000) at (1.00000, 0.00000, 0.00000);
\coordinate (0.00000, 1.00000, 0.00000) at (0.00000, 1.00000, 0.00000);
\coordinate (0.50000, 0.00000, 0.50000) at (0.50000, 0.00000, 0.50000);
\coordinate (0.00000, 0.50000, 0.50000) at (0.00000, 0.50000, 0.50000);
\draw[edge,back] (0.00000, 0.00000, 0.00000) -- (0.00000, 1.00000, 0.00000);
\fill[facet] (0.50000, 0.00000, 0.50000) -- (0.00000, 0.00000, 0.50000) -- (0.00000, 0.00000, 0.00000) -- (1.00000, 0.00000, 0.00000) -- cycle {};
\fill[facet] (0.00000, 0.50000, 0.50000) -- (0.00000, 1.00000, 0.00000) -- (1.00000, 0.00000, 0.00000) -- (0.50000, 0.00000, 0.50000) -- cycle {};
\fill[facet] (0.00000, 0.50000, 0.50000) -- (0.00000, 0.00000, 0.50000) -- (0.50000, 0.00000, 0.50000) -- cycle {};
\draw[edge] (0.00000, 0.00000, 0.50000) -- (0.00000, 0.00000, 0.00000);
\draw[edge] (0.00000, 0.00000, 0.50000) -- (0.50000, 0.00000, 0.50000);
\draw[edge] (0.00000, 0.00000, 0.50000) -- (0.00000, 0.50000, 0.50000);
\draw[edge] (0.00000, 0.00000, 0.00000) -- (1.00000, 0.00000, 0.00000);
\draw[edge] (1.00000, 0.00000, 0.00000) -- (0.00000, 1.00000, 0.00000);
\draw[edge] (1.00000, 0.00000, 0.00000) -- (0.50000, 0.00000, 0.50000);
\draw[edge] (0.00000, 1.00000, 0.00000) -- (0.00000, 0.50000, 0.50000);
\draw[edgered] (0.50000, 0.00000, 0.50000) -- node[above right] {\color{black}$S_2$} node[left] {\color{black}$G$} (0.00000, 0.50000, 0.50000);
\node[vertexred, label=above:{$S_1$}] at (0.00000, 0.00000, 0.50000)     {};
\node[vertex] at (0.00000, 0.00000, 0.00000)     {};
\node[vertex] at (1.00000, 0.00000, 0.00000)     {};
\node[vertex] at (0.00000, 1.00000, 0.00000)     {};
\node[vertex] at (0.50000, 0.00000, 0.50000)     {};
\node[vertex] at (0.00000, 0.50000, 0.50000)     {};

\node at (0.00, 0.00, -0.20) {${\rm cut}_p(\Delta^3) $};
\end{tikzpicture}

\begin{tikzpicture}%
	[x={(0.733255cm, -0.308755cm)},
	y={(0.679249cm, 0.292077cm)},
	z={(0.030936cm, 0.905186cm)},
	scale=3.000000,
	back/.style={loosely dotted, thick},
	edge/.style={color=blue!95!black, thick},
	facet/.style={fill=blue!95!black,fill opacity=0.300000},
	facetlast/.style={fill=red!95!black,fill opacity=0.300000},
	vertex/.style={inner sep=1pt,circle,draw=blue!25!black,fill=blue!75!black,thick,anchor=base}]	
%
%
\coordinate (0.00000, 0.20000, 0.50000) at (0.00000, 0.20000, 0.50000);
\coordinate (0.20000, 0.00000, 0.50000) at (0.20000, 0.00000, 0.50000);
\coordinate (0.00000, 0.00000, 0.00000) at (0.00000, 0.00000, 0.00000);
\coordinate (0.00000, 0.00000, 0.30000) at (0.00000, 0.00000, 0.30000);
\coordinate (0.00000, 1.00000, 0.00000) at (0.00000, 1.00000, 0.00000);
\coordinate (1.00000, 0.00000, 0.00000) at (1.00000, 0.00000, 0.00000);
\coordinate (0.50000, 0.00000, 0.50000) at (0.50000, 0.00000, 0.50000);
\coordinate (0.00000, 0.50000, 0.50000) at (0.00000, 0.50000, 0.50000);
\draw[edge,back] (0.00000, 0.00000, 0.00000) -- (0.00000, 1.00000, 0.00000);
\fill[facet] (0.50000, 0.00000, 0.50000) -- (0.20000, 0.00000, 0.50000) -- (0.00000, 0.00000, 0.30000) -- (0.00000, 0.00000, 0.00000) -- (1.00000, 0.00000, 0.00000) -- cycle {};
\fill[facet] (0.00000, 0.50000, 0.50000) -- (0.00000, 1.00000, 0.00000) -- (1.00000, 0.00000, 0.00000) -- (0.50000, 0.00000, 0.50000) -- cycle {};
\fill[facet] (0.00000, 0.50000, 0.50000) -- (0.00000, 0.20000, 0.50000) -- (0.20000, 0.00000, 0.50000) -- (0.50000, 0.00000, 0.50000) -- cycle {};
\fill[facetlast] (0.00000, 0.00000, 0.30000) -- (0.00000, 0.20000, 0.50000) -- (0.20000, 0.00000, 0.50000) -- cycle {};
\draw[edge] (0.00000, 0.20000, 0.50000) -- (0.20000, 0.00000, 0.50000);
\draw[edge] (0.00000, 0.20000, 0.50000) -- (0.00000, 0.00000, 0.30000);
\draw[edge] (0.00000, 0.20000, 0.50000)  node[left] {\color{black}$G_1$} -- (0.00000, 0.50000, 0.50000);
\draw[edge] (0.20000, 0.00000, 0.50000) -- (0.00000, 0.00000, 0.30000);
\draw[edge] (0.20000, 0.00000, 0.50000) -- (0.50000, 0.00000, 0.50000);
\draw[edge] (0.00000, 0.00000, 0.00000) -- (0.00000, 0.00000, 0.30000);
\draw[edge] (0.00000, 0.00000, 0.00000) -- (1.00000, 0.00000, 0.00000);
\draw[edge] (0.00000, 1.00000, 0.00000) -- (1.00000, 0.00000, 0.00000);
\draw[edge] (0.00000, 1.00000, 0.00000) -- (0.00000, 0.50000, 0.50000);
\draw[edge] (1.00000, 0.00000, 0.00000) -- (0.50000, 0.00000, 0.50000);
\draw[edge] (0.50000, 0.00000, 0.50000) -- (0.00000, 0.50000, 0.50000);
\node[vertex] at (0.00000, 0.20000, 0.50000)     {};
\node[vertex] at (0.20000, 0.00000, 0.50000)     {};
\node[vertex] at (0.00000, 0.00000, 0.00000)     {};
\node[vertex] at (0.00000, 0.00000, 0.30000)     {};
\node[vertex] at (0.00000, 1.00000, 0.00000)     {};
\node[vertex] at (1.00000, 0.00000, 0.00000)     {};
\node[vertex] at (0.50000, 0.00000, 0.50000)     {};
\node[vertex] at (0.00000, 0.50000, 0.50000)     {};

\node at (0.00, 0.00, -0.30) {${\rm cut}_{S_1}({\rm cut}_p(\Delta^3)) $};
\end{tikzpicture}\hspace{4cm}
\begin{tikzpicture}%
	[x={(0.733255cm, -0.308755cm)},
	y={(0.679249cm, 0.292077cm)},
	z={(0.030936cm, 0.905186cm)},
	scale=3.000000,
	back/.style={loosely dotted, thick},
	edge/.style={color=blue!95!black, thick},
	facet/.style={fill=blue!95!black,fill opacity=0.300000},
	facetlast/.style={fill=red!95!black,fill opacity=0.300000},	
	vertex/.style={inner sep=1pt,circle,draw=blue!25!black,fill=blue!75!black,thick,anchor=base}]	
	
%
\coordinate (0.00000, 0.00000, 0.50000) at (0.00000, 0.00000, 0.50000);
\coordinate (0.00000, 0.00000, 0.00000) at (0.00000, 0.00000, 0.00000);
\coordinate (1.00000, 0.00000, 0.00000) at (1.00000, 0.00000, 0.00000);
\coordinate (0.00000, 1.00000, 0.00000) at (0.00000, 1.00000, 0.00000);
\coordinate (0.00000, 0.30000, 0.50000) at (0.00000, 0.30000, 0.50000);
\coordinate (0.30000, 0.00000, 0.50000) at (0.30000, 0.00000, 0.50000);
\coordinate (0.70000, 0.00000, 0.30000) at (0.70000, 0.00000, 0.30000);
\coordinate (0.00000, 0.70000, 0.30000) at (0.00000, 0.70000, 0.30000);
\draw[edge,back] (0.00000, 0.00000, 0.00000) -- (0.00000, 1.00000, 0.00000);
\fill[facet] (0.70000, 0.00000, 0.30000) -- (1.00000, 0.00000, 0.00000) -- (0.00000, 0.00000, 0.00000) -- (0.00000, 0.00000, 0.50000) -- (0.30000, 0.00000, 0.50000) -- cycle {};
\fill[facet] (0.00000, 0.70000, 0.30000) -- (0.00000, 1.00000, 0.00000) -- (1.00000, 0.00000, 0.00000) -- (0.70000, 0.00000, 0.30000) -- cycle {};
\fill[facet] (0.30000, 0.00000, 0.50000) -- (0.00000, 0.00000, 0.50000) -- (0.00000, 0.30000, 0.50000) -- cycle {};
\fill[facetlast] (0.00000, 0.70000, 0.30000) -- (0.00000, 0.30000, 0.50000) -- (0.30000, 0.00000, 0.50000) -- (0.70000, 0.00000, 0.30000) -- cycle {};
\draw[edge] (0.00000, 0.00000, 0.50000) -- (0.00000, 0.00000, 0.00000);
\draw[edge] (0.00000, 0.00000, 0.50000) -- (0.00000, 0.30000, 0.50000);
\draw[edge] (0.00000, 0.00000, 0.50000) -- (0.30000, 0.00000, 0.50000)  node[right] {\color{black}$G_2$};
\draw[edge] (0.00000, 0.00000, 0.00000) -- (1.00000, 0.00000, 0.00000);
\draw[edge] (1.00000, 0.00000, 0.00000) -- (0.00000, 1.00000, 0.00000);
\draw[edge] (1.00000, 0.00000, 0.00000) -- (0.70000, 0.00000, 0.30000);
\draw[edge] (0.00000, 1.00000, 0.00000) -- (0.00000, 0.70000, 0.30000);
\draw[edge] (0.00000, 0.30000, 0.50000) -- (0.30000, 0.00000, 0.50000);
\draw[edge] (0.00000, 0.30000, 0.50000) -- (0.00000, 0.70000, 0.30000);
\draw[edge] (0.30000, 0.00000, 0.50000) -- (0.70000, 0.00000, 0.30000);
\draw[edge] (0.70000, 0.00000, 0.30000) -- (0.00000, 0.70000, 0.30000);
\node[vertex] at (0.00000, 0.00000, 0.50000)     {};
\node[vertex] at (0.00000, 0.00000, 0.00000)     {};
\node[vertex] at (1.00000, 0.00000, 0.00000)     {};
\node[vertex] at (0.00000, 1.00000, 0.00000)     {};
\node[vertex] at (0.00000, 0.30000, 0.50000)     {};
\node[vertex] at (0.30000, 0.00000, 0.50000)     {};
\node[vertex] at (0.70000, 0.00000, 0.30000)     {};
\node[vertex] at (0.00000, 0.70000, 0.30000)     {};

\node at (0.00, 0.00, -0.30) {${\rm cut}_{S_2}({\rm cut}_p(\Delta^3)) $};
\end{tikzpicture}
\end{figure}

\begin{rem}
It is important to assume that the simplices $S_{1}$ and $S_{2}$ are complementary in $G$, otherwise Proposition \ref{equiv} does not necessarily hold. 
\end{rem}

\begin{cor}\label{cor:comb-inv}
Consider an $n$-dimensional smooth projective toric variety $X$. Take a fixed point $x\in X$ and consider a blow-up $\pi\colon Bl_x X\to X$. Pick two non-intersecting $(\C^*)^n$-invariant subspaces $\C P^k\simeq Z_1\subset E\subset Bl_x X$ and $\C P^{n-k-2}\simeq Z_2\subset E\subset Bl_x X$ in the exceptional divisor of $\pi$. Then the blow-ups $B_k(X)=Bl_{Z_1}(Bl_x X)$ and $B_{n-k-2}(X)=Bl_{Z_2}(Bl_x X)$ are toric varieties with combinatorially equivalent moment polytopes.
\end{cor}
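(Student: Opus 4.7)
The plan is to reduce Corollary~\ref{cor:comb-inv} directly to Proposition~\ref{equiv} by dictionary translation between equivariant blow-ups and polytope truncations. First I would recall the correspondence between $(\C^*)^n$-invariant subvarieties of the toric variety $X$ and faces of its moment polytope $P$: fixed points correspond to vertices, invariant $m$-dimensional subvarieties correspond to $m$-dimensional faces, and two invariant subvarieties intersect if and only if the corresponding faces do.

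Next I would unpack the toric geometry. Pick a vertex $p \in P$ corresponding to the chosen fixed point $x\in X$. By the discussion in Section~\ref{sec:equivariant_modification}, the blow-up $Bl_x X$ is the toric variety whose polytope is ${\rm cut}_p P$, with a new facet $G \simeq \Delta^{n-1}$ whose interior corresponds to the exceptional divisor $E \simeq \C P^{n-1}$. Under the above dictionary applied to $E$ with its $(\C^*)^n$-action, the invariant subvariety $Z_1 \simeq \C P^k \subset E$ corresponds to a $k$-dimensional face $S_1 \simeq \Delta^k$ of $G$, and $Z_2 \simeq \C P^{n-k-2} \subset E$ corresponds to an $(n-k-2)$-dimensional face $S_2 \simeq \Delta^{n-k-2}$ of $G$. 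The hypothesis that $Z_1$ and $Z_2$ are disjoint translates precisely into the complementarity $S_1 \cap S_2 = \varnothing$, which since $\dim S_1 + \dim S_2 = n-2 = \dim G - 1$ means $S_1$ and $S_2$ are complementary faces of the simplex $G$.

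Now I would invoke Proposition~\ref{st:bk-toric} to conclude that $B_k(X) = Bl_{Z_1}(Bl_x X)$ and $B_{n-k-2}(X) = Bl_{Z_2}(Bl_x X)$ are toric varieties whose moment polytopes are respectively $P'_1 = {\rm cut}_{S_1}({\rm cut}_p P)$ and $P'_2 = {\rm cut}_{S_2}({\rm cut}_p P)$. Since $S_1$ and $S_2$ are complementary faces of $G$, Proposition~\ref{equiv} applies directly and yields a combinatorial equivalence between $P'_1$ and $P'_2$, completing the proof.

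No step here presents a serious obstacle: the content of Corollary~\ref{cor:comb-inv} is essentially Proposition~\ref{equiv} translated from polytope combinatorics back into the language of toric geometry. The only point requiring care is checking that the geometric disjointness $Z_1 \cap Z_2 = \varnothing$ genuinely corresponds to the combinatorial complementarity of $S_1$ and $S_2$ in $G$, but this follows immediately from the standard face/orbit correspondence for toric varieties.
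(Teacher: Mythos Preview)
Your proposal is correct and matches the paper's approach exactly: the paper presents this corollary without a separate proof, as it is meant to follow immediately from Proposition~\ref{equiv} via the standard dictionary between equivariant blow-ups and face truncations already set up in Section~\ref{sec:equivariant_modification}. Your observation that disjointness of $Z_1$ and $Z_2$ forces $S_1$ and $S_2$ to be complementary faces of the simplex $G=\Delta^{n-1}$ (because their vertex sets have cardinalities $k+1$ and $n-k-1$ summing to $n$) is the one point worth making explicit, and you have done so.
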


\begin{ex}
Consider the simplest three-dimensional compact toric variety $X=\C P^3$. Its underlying polytope is the three-dimensional simplex $P=\Delta^3$. The blow-up of $\C P^3$ at a $(\C^*)^3$-invariant point corresponds to the vertex truncation ${\rm cut }_{p}(\Delta^3)$ of the simplex $\Delta^3$.  The new facet of the vertex-truncated simplex ${\rm cut }_{p}(\Delta^3)$ is a two-dimensional simplex. Let us pick any vertex $S_{1}$ and the complementary edge $S_{2}$ of this new facet. The polytopes ${\rm cut}_{S^{0}}({\rm cut}_{p}(\Delta^3))$ and ${\rm cut}_{S^{1}}({\rm cut}_{p}(\Delta^3))$ are the moment polytopes for the toric varieties $B_0(X)$ and $B_1(X)$. Figure\,\ref{fig:cp3} demonstrates that these polytopes are indeed combinatorially equivalent.
%
%
\end{ex}

\subsection{Combinatorial rigidity of Hirzebruch genera}

In this Subsection we give an application of the modifications $B_k$ to the theory of Hirzebruch genera. More specifically, we give a \emph{combinatorial} characterisation of the two-parameter Todd genus.

Let $R$ be a commutative ring with a unit. \emph{Hirzebruch genus} with values in $R$ is a ring homomorphism $\phi:\ \Omega^{U}_{*}\to R$. (The term \emph{multiplicative genus} is also used). Assume that ring $R$ does not have additive torsion. Then any genus $\phi$ is uniquely determined by its extension $\phi_\mathbb Q\colon \Omega^U_*\otimes \mathbb Q\to R\otimes \mathbb Q$.
Every Hirzebruch genus $\phi:\ \Omega_{*}^U\rightarrow R$ corresponds to a formal power series
\[
Q(x)=1+\sum_{k=1}^{\infty}q_{k} x^{k}\in (R\otimes\Q)[[x]].
\]
 In this case the value of $\phi$ on a stably complex manifold $M^{2n}$ is given by the formula $\phi(M):=\langle  Q(t_1)\cdots Q(t_n), [M]\rangle$, where $t_i$ are Chern roots of $M$ (for more details on Hirzebruch genera see \cite{hi-66, hi-92}).

\begin{ex}[Two-parameter Todd genus]
The two-parameter Todd genus $\chi_{a,b}\colon \Omega^U_*\to \Z[a,b]$ is given by the $Q$-series $x\cdot \cfrac{ae^{ax}-be^{bx}} {e^{ax}-e^{bx}}$ (cf.\,the proof in \cite[Lemma 2.3]{kr-74}). Some specializations of the two-parameter Todd genus include: signature (for $a=1,\ b=-1$), arithmetic genus (for $a=1,\ b=0$), Euler characteristic (for $a=1$ and taking the limit as $b\to 1$).  With the specialization $a=1,\ b=-y$ the genus $\chi_{a,b}$ turns into the $\chi_{y}$-genus. Similarly to the $\chi_y$-genus (cf.\,\cite[\S 21]{hi-66},\cite[\S 5.4]{hi-92}), the value of $\chi_{a,b}$ on any $n$-dimensional complex manifold can be expressed in terms of the the dimensions of the cohomology groups of holomorphic differentials:
\begin{equation}\label{eq:todd}
\chi_{a,b}(M)=\sum_{i,j=0}^n \dim_\C H^j(M; \Omega^i) (-1)^{i+j}a^{n-i}b^{i}.
\end{equation}

Notice that the image of the two-parameter Todd genus is the ring of symmetric polynomials $\Z[\sigma_1,\sigma_2]\subset\Z[a,b]$ in two variables: $\sigma_1=a+b=\chi_{a,b}([\C P^1]), \sigma_2=ab=\chi_{a,b}([\C P^1]^2-[\C P^2])$. Further we consider the genus $\chi_{a,b}$ as a ring homomorphism onto $\Z[\sigma_1,\sigma_2]$.
\end{ex}

\begin{definition}
We say that a Hirzebruch genus $\phi\colon\Omega^U_*\to R$ is \emph{combinatorially rigid}, if for any two smooth projective toric varieties $X_1$ and $X_2$ with combinatorially equivalent moment polytopes $P_1\simeq P_2$ the corresponding values of $\phi$ coincide: $\phi(X_1)=\phi(X_2)$.
\end{definition}

\begin{ex}
The Euler characteristic of a toric variety $\chi(X_P)$ is equal to the number of vertices of the underlying moment polytope $P$. Hence, it is combinatorially rigid.
\end{ex}

One can deduce from the results of Danilov (cohomology of toric varieties, cf.\,\cite[\S 12]{da-78}) and formula~\eqref{eq:todd} that the two-parameter Todd genus $\chi_{a,b}$ is given by:
\[
\chi_{a,b}(X_P)=\sum_{i=0}^n h_i(P) a^i b^{n-i},
\]
where $h_i(P)$ are the components of the $h$-vector of the polytope $P$, cf.\,\cite[Chapter 1, \S 3]{bu-pa-15}. The numbers $h_i(P)$ are combinatorial invariants of the polytope $P$. Hence, the two-parameter Todd genus is combinatorially rigid.

It turns out that any combinatorially rigid Hirzebruch genus is a specialization of the two-parameter Todd genus.

\begin{thm}\label{thm:2todd_toric}
Let $\phi\colon \Omega^U_*\to R$ be a combinatorially rigid $R$-genus. Then there exists a unique ring homomorphism $f\colon \Z[\sigma_1,\sigma_2]\to R$ such that $\phi=f\circ\chi_{a,b}:$
\[
\begin{tikzcd}[column sep=2]
\Omega^U_* \arrow{dr}{\chi_{a,b}} \arrow{rr}{\phi} && R\\
 & \Z[\sigma_1,\sigma_2] \arrow[dashed]{ur}{f}
\end{tikzcd}
\]
\end{thm}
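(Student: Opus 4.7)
Uniqueness of $f$ is immediate: the map $\chi_{a,b}$ surjects onto $\Z[\sigma_1,\sigma_2]$ and $\sigma_1=\chi_{a,b}([\C P^1])$, $\sigma_2=\chi_{a,b}([\C P^1]^2-[\C P^2])$, so any admissible $f$ is forced to satisfy $f(\sigma_1)=\phi([\C P^1])$ and $f(\sigma_2)=\phi([\C P^1])^2-\phi([\C P^2])$, which determines it on the two algebraically independent generators of $\Z[\sigma_1,\sigma_2]$.

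For existence I would define $f$ by these two values and verify $\phi=f\circ\chi_{a,b}$. Both sides are ring homomorphisms out of the torsion-free ring $\Omega^U_*$, and $\Omega^U_*\otimes\Q\cong\Q[[\C P^1],[\C P^2],\ldots]$, so it is enough to check the equality on each $[\C P^n]$. The cases $n=1,2$ hold by construction, and for $n\ge 3$ the recursion $h_n(a,b)=\sigma_1 h_{n-1}(a,b)-\sigma_2 h_{n-2}(a,b)$ for complete homogeneous symmetric polynomials, together with an induction on $n$, reduces the equality to the identity
\[
\phi(r_n)=0,\qquad r_n:=[\C P^n]-[\C P^1][\C P^{n-1}]+[\C P^1]^2[\C P^{n-2}]-[\C P^2][\C P^{n-2}].
\]

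To establish these identities from combinatorial rigidity I would use Corollary~\ref{cor:comb-inv} as the principal input: for any smooth projective toric variety $X$ of complex dimension $n$ and each $0\le k\le n-2$, the modifications $B_k(X)$ and $B_{n-k-2}(X)$ are toric with combinatorially equivalent polytopes. Combined with the identity $[B_k(X)]-[X]=-[D_{0,n}]-[D_{k,n}]$ from Section~\ref{subsec:cmplxblowup}, rigidity then yields $\phi([D_{k,n}])=\phi([D_{n-k-2,n}])$ for each admissible $k$. Combining these with other families of combinatorially equivalent smooth projective toric varieties --- iterated modifications $B_{k_1}\circ\dots\circ B_{k_\ell}$ applied to $\C P^n$, products of such modifications of lower-dimensional toric varieties, and Hirzebruch-like $\P^1$-bundle twists realizing the polytope types of $\C P^1\times\C P^{n-1}$, $(\C P^1)^2\times\C P^{n-2}$ and $\C P^2\times\C P^{n-2}$ --- should produce enough elements of $\ker\phi$ to express $r_n$ as a $\Z$-linear combination of combinatorial-equivalence differences.

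The hard part is precisely this last step. A cleaner formulation is the structural claim that the ideal $I_{\mathrm{comb}}\subset\Omega^U_*$ generated by all differences $[X_1]-[X_2]$ of combinatorially equivalent smooth projective toric varieties coincides with $\ker\chi_{a,b}$. The inclusion $I_{\mathrm{comb}}\subseteq\ker\chi_{a,b}$ follows at once from the Danilov $h$-vector formula together with the combinatorial rigidity of $\chi_{a,b}$ established just before the theorem. The reverse inclusion --- which would at once give $\Omega^U_*/I_{\mathrm{comb}}\cong\Z[\sigma_1,\sigma_2]$ and settle the theorem for every combinatorially rigid $\phi$ uniformly --- is the substantive content, and is to be obtained by the polytope-manipulation argument sketched above.
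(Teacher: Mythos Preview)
Your uniqueness argument and the reduction to the identities $\phi(r_n)=0$ are correct, and they match the logical skeleton of the paper's proof. The genuine gap is exactly where you locate it yourself: you do not prove that $r_n$ lies in the ideal generated by combinatorial-equivalence differences. The sketch you give --- assembling $r_n$ out of $\phi([D_{k,n}])=\phi([D_{n-k-2,n}])$, iterated modifications, and ``Hirzebruch-like $\P^1$-bundle twists'' --- is not carried out, and it is not clear that the particular integral combination $r_n$ can be reached this way. Your reformulation $I_{\mathrm{comb}}=\ker\chi_{a,b}$ over $\Z$ is in fact strictly stronger than what the theorem requires, and the paper neither proves nor uses it.

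The paper closes the gap by a single clean observation that bypasses the explicit manipulation of $r_n$ entirely. Working over $\Q$ (legitimate since $R$ is torsion-free), consider the ideal $\mc I\subset\Omega^U_*\otimes\Q$ generated by the differences $[X_1]-[X_2]$ of toric varieties with combinatorially equivalent polytopes. By Corollary~\ref{cor:comb-inv}, each class $[B_0(\C P^n)]-[B_{n-2}(\C P^n)]$ with $n\ge 3$ lies in $\mc I$; and by the explicit values in Example~\ref{ex:sn-dnk} one checks $s_n\bigl(B_0(\C P^n)\bigr)\neq s_n\bigl(B_{n-2}(\C P^n)\bigr)$, so these differences have nonzero Milnor number. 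Hence $[\C P^1],[\C P^2]$ together with $\{[B_0(\C P^n)]-[B_{n-2}(\C P^n)]\}_{n\ge 3}$ form a polynomial generating set for $\Omega^U_*\otimes\Q$. Killing the latter generators already lands in $\Q[[\C P^1],[\C P^2]]$, and $\chi_{a,b}$ restricted there is an isomorphism onto $\Q[\sigma_1,\sigma_2]$; therefore $(\Omega^U_*\otimes\Q)/\mc I\cong\Q[\sigma_1,\sigma_2]$ via $\chi_{a,b}$, and every combinatorially rigid $\phi\otimes\Q$ factors through it. Restricting the resulting $f_\Q$ to $\Z[\sigma_1,\sigma_2]$ gives $f$ with image in $R$, since $f_\Q(\sigma_1)=\phi([\C P^1])$ and $f_\Q(\sigma_2)=\phi([\C P^1]^2-[\C P^2])$ already lie in $R$.

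In short: rather than trying to write each $r_n$ as a combination of combinatorial-equivalence relations, choose a new system of rational polynomial generators in which all generators of degree $\ge 3$ are \emph{themselves} such relations. This is exactly what the Milnor-number computation for $B_0$ versus $B_{n-2}$ buys you, and it replaces the ad~hoc assembly you propose with a one-line structural argument.
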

\begin{proof}
By our assumption the ring $R$ is torsion-free. We construct a homomorphism $f_\Q\colon \Q[\sigma_1,\sigma_2]\to R\otimes \Q$ such that $\phi\otimes \Q=f_\Q\circ(\chi_{a,b}\otimes\Q)$.

Consider the ideal $\mc I\subset \Omega^U_*\otimes \Q$ generated by all the differences $[X_1]-[X_2]$, where $X_1, X_2$ are smooth projective toric varieties with combinatorially equivalent moment polytopes. It follows from the definition of combinatorial rigidity that $\phi\otimes \Q$ factors through the quotient ring $(\Omega^U_*\otimes \Q)/\mc I$. Recall that the two-parameter Todd genus is combinatorially rigid and maps the ring $\Omega^U_*\otimes \Q$ onto $\Q[\sigma_1,\sigma_2]$. Therefore there exists an epimorphism $\pi_1\colon (\Omega^U_*\otimes \Q)/\mc I\to \Q[\sigma_1,\sigma_2]$ such that the composition
\[
\Omega^U_*\otimes \Q\to (\Omega^U_*\otimes \Q)/\mc I\xrightarrow{\pi_1} \Q[\sigma_1,\sigma_2]
\]
is equal to $\chi_{a,b}\otimes \Q$. Now it remains to check that the projection $\pi_1$ is an isomorphism.

For any $n\ge 3$ consider two equivariant modifications $B_0(\C P^n)$ and $B_{n-2}(\C P^n)$ with combinatorially equivalent moment polytopes (cf.\,Corollary~\ref{cor:comb-inv}). It follows from the computations of Example~\ref{ex:sn-dnk} that $s_n(B_0(\C P^n))\neq s_n(B_{n-2}(\C P^n))$. Hence the elements $[\C P^1], [\C P^2]$, $\{[B_0(\C P^n)]-[B_{n-2}(\C P^n)]\}_{n\ge 3}$ are polynomial generators of the ring $\Omega^U_*\otimes \Q$. By construction, the elements $\{[B_0(\C P^n)]-[B_{n-2}(\C P^n)]\}_{n\ge 3}$ belong to the ideal $\mc I$. Hence, there exists a natural projection $\pi_2\colon \Q[\C P^1,\C P^2]\to (\Omega^U_*\otimes \Q)/\mc I$. The composition
\[
\Q[\C P^1,\C P^2]\xrightarrow{\pi_2} (\Omega^U_*\otimes \Q)/\mc I\xrightarrow{\pi_1} \Q[\sigma_1,\sigma_2]
\]
coincides with the evaluation map of the genus $\chi_{a,b}$ and is an isomorphism: $[\C P^1]\mapsto \sigma_1$, $[\C P^2]\mapsto \sigma_1^2-\sigma_2$. Therefore $\pi_1$ is also an isomorphism.

We have obtained a factorisation of the homomorphism $\phi\otimes\Q\colon \Omega_*^U\otimes\Q\to R\otimes\Q$ through the two-parameter Todd genus $\chi_{a,b}\otimes \Q\colon \Omega^U_*\otimes\Q\to\Q[\sigma_1,\sigma_2]$:
\[
\begin{tikzcd}[column sep = 2]
\Omega^U_*\otimes\Q \arrow{dr}[below left]{\chi_{a,b}\otimes \Q} \arrow{rr}{\phi\otimes\Q} && R\otimes \Q \\
 & \Q[\sigma_1,\sigma_2] \arrow[dashed]{ur}[below right]{f_\Q}
\end{tikzcd}
\]
Now we define $f$ by restricting $f_\Q$ to $\Z[\sigma_1,\sigma_2]\subset\Q[\sigma_1,\sigma_2]$. The image of $f$ is in $R$, since $f_\Q(\sigma_1)=f_\Q\circ \chi_{a,b}(\C P^1)=(\phi\otimes \Q)(\C P^1)$ and the latter lies in $R$, as the image of $\Omega^U$ under $\phi$ is in $R$ (similar argument works for the image of $\sigma_2$). Hence $f$ maps $\Z[\sigma_1,\sigma_2]$ to $R$ and $\phi=f\circ\chi_{a,b}$ is the required factorisation.

Uniqueness of $f$ is obvious, since necessarily $f(\sigma_1)=\phi([\C P^1])$ and $f(\sigma_2)=\phi([\C P^1]^2-[\C P^2])$
\end{proof}

\section{Concluding remarks}
In this paper we have constructed polynomial generators of the unitary cobordism ring $\Omega^U_*$ among the smooth projective toric varieties. We note, however, that every manifold provided by Theorem~\ref{thm:main} is the result of a very large number of modifications $B_{k_i},$ thus has quite complicated topology. So it is interesting to seek toric polynomial generators of $\Omega^{U}_{*}$ with the ``smallest topology'':

{\medskip\noindent\bf Problem.} Find projective toric polynomial generators of the ring $\Omega^U_*$ with moment polytopes having:
\begin{itemize}
\item[a)] the least number of vertices;
\item[b)] the least number of facets;
\item[c)] the least number of faces of all dimensions.
\end{itemize}

Analysing the modifications $B_k$ we gave a characterization of the two-parameter Todd genus in terms of combinatorial rigidity (cf.\,Theorem~\ref{thm:2todd_toric}). Combinatorial rigidity of any genus $\phi$ requires the coincidence of the values $\phi(X_1)=\phi(X_2)$ for any pair of toric varieties with combinatorially equivalent moment polytopes. It is natural to generalise the notion of combinatorial rigidity in the following way:
\begin{definition}
Let $\mc P = \{P_i\}_{i\in I}$ be a family of simple combinatorial polytopes. A Hirzebruch genus $\phi$ is called \emph{combinatorially $\mc P$-rigid}, if for all pairs of toric varieties with moment polytopes combinatorially equivalent to the same polytope from~$\mc P$ one has $\phi(X_1)=\phi(X_2)$.
\end{definition}

With the definition of combinatorial $\mc P$-rigidity it is interesting to study the following problems:

{\medskip\noindent\bf Problem.} For a given family of combinatorial polytopes $\mc P$ (e.g., Stasheff polytopes, permutohedra, nestohedra, etc.) describe combinatorially $\mc P$-rigid Hirzebruch genera.

{\medskip\noindent\bf Problem.} For a given Hirzebruch genus $\phi$ find a maximal family of polytopes $\mc P$ such that the genus $\phi$ is combinatorially $\mc P$-rigid.

\section{Acknowledgements}
We are grateful to V.M.\,Buchstaber and T.E.\,Panov for suggesting the problems studied in this paper and for numerous fruitful discussions. We also would like to thank P.\,Landweber for the most helpful remarks and suggestions.


\begin{bibdiv}
\begin{biblist}[\resetbiblist{99}]
\bibselect{biblio_eng}
\end{biblist}
\end{bibdiv}

\end{document}